\newcommand*{\circled}[1]{\lower.7ex\hbox{\tikz\draw (0pt, 0pt)%
    circle (.5em) node {\makebox[0.4em][c]{\small #1}};}}
\providecommand{\U}[1]{\protect \rule{.1in}{.1in}}
\newtheorem{theorem}{Theorem}[section]
\newtheorem{corollary}[theorem]{Corollary}
\newtheorem{definition}[theorem]{Definition}
\newtheorem{example}[theorem]{Example}
\newtheorem{lemma}[theorem]{Lemma}
\newtheorem{remark}[theorem]{Remark}
\newenvironment{proof}[1][Proof]{\noindent \textbf{#1.} }{\  $\Box$}
\numberwithin{equation}{section}
\begin{document}

\title{\textbf{A Strong Law of Large Numbers under Sublinear Expectations  }}
\author{Yongsheng Song\thanks{RCSDS, Academy of Mathematics and Systems Science, Chinese Academy of Sciences, Beijing 100190, China, and
School of Mathematical Sciences, University of Chinese Academy of Sciences, Beijing 100049, China. E-mail:
yssong@amss.ac.cn.}  }

\date{}
\maketitle

\begin{abstract}  We consider a sequence of i.i.d. random variables $\{\xi_k\}$under a sublinear expectation $\mathbb{E}=\sup_{P\in\Theta}E_P$. We first give a new proof to the fact that, under each $P\in\Theta$, any cluster point of the empirical averages $\bar{\xi}_n=(\xi_1+\cdots+\xi_n)/n$ lies in $[\underline{\mu}, \overline{\mu}]$ with $\underline{\mu}=-\mathbb{E}[-\xi_1], \overline{\mu}=\mathbb{E}[\xi_1]$. Then, we consider sublinear expectations on a Polish space $\Omega$, and show that for each constant $\mu\in [\underline{\mu},\overline{\mu}]$, there exists a probability $P_{\mu}\in\Theta$ such that
\begin {eqnarray}\label {intro-a.s.}
\lim_{n\rightarrow\infty}\bar{\xi}_n=\mu, \ P_{\mu}\textmd{-a.s.},
\end {eqnarray}
 supposing that $\Theta$ is weakly compact and $\{\xi_n\}\in L^1_{\mathbb{E}}(\Omega)$. Under the same conditions, we can get a generalization of (\ref {intro-a.s.}) in the  product space $\Omega=\mathbb{R}^{\mathbb{N}}$ with $\mu\in [\underline{\mu},\overline{\mu}]$ replaced by $\Pi=\pi(\xi_1, \cdots,\xi_d)\in [\underline{\mu},\overline{\mu}]$, where $\pi$ is a Borel measurable function on $\mathbb{R}^d$, $d\in\mathbb{R}$. Finally, we characterize the triviality of the tail $\sigma$-algebra of i.i.d. random variables under a sublinear expectation.
\end{abstract}

\textbf{Key words}: law of large numbers; sublinear expectations

\textbf{MSC-classification}: 60F15; 60G50; 60G65

\section{Introduction}
 Let $\{\xi_k\}_{k\ge 1}$ be a sequence of independent and identically distributed random variables  under a sublinear expectation $\mathbb{E}=\sup_{P\in\Theta}E_P$. Set $\bar{\xi}_n=\frac{\xi_1+\cdots+\xi_n}{n}$, $\underline{\mu}=-\mathbb{E}[-\xi_1]$, $\overline{\mu}=\mathbb{E}[\xi_1]$.  Peng, S. (\cite{P08b}) proved that
  \begin {eqnarray}
\lim_{n} \mathbb{E}[\phi(\bar{\xi}_n)]=\sup_{y\in[\underline{\mu}, \overline{\mu}]}\phi(y),
\end {eqnarray} for any $\phi\in C_{b, Lip}(\mathbb{R})$, the collection of bounded and Lipschitz continuous functions on $\mathbb{R}$. This is called the (weak) law of large numbers under sublinear expectations (wLLN*).

Recently, Song (2021) (\cite {Song19}) gave the following error estimate for wLLN*:
\begin {eqnarray}
\sup_{|\phi|_{Lip}\le1}\bigg| \mathbb{E}[\phi(\bar{\xi}_n)]-\sup_{y\in[\underline{\mu}, \overline{\mu}]}\phi(y)\bigg|\le C n^{-\frac{\beta}{1+\beta}},
\end {eqnarray} where $C$ is a constant depending only on $\mathbb{E}[|\xi_1|^{1+\beta}]$.

In this paper, we consider the strong law of large numbers under sublinear expectations (sLLN*). \cite {Chen16} and \cite {CHW19} gave the following bounds for the cluster points  of empirical averages:  For any $P\in\Theta$,
\begin {eqnarray} \label {sLLN-bounds-intro}
\underline{\mu}\le\liminf_{n\rightarrow\infty}\bar{\xi}_n\le\limsup_{n\rightarrow\infty}\bar{\xi}_n\le\overline{\mu}, \ P\emph{-a.s.}
\end {eqnarray}
In this paper, we shall give an alternative proof to the above bounds of the cluster points based on the error estimates for wLLN* given in (\cite {Song19}).

The main purpose of this paper is to study the limit properties of empirical averages of i.i.d. random variables $\{\xi_k\}$ under a sublinear expectation $\mathbb{E}=\sup_{P\in\Theta}E_P$ which is defined on a Polish space $\Omega$.

First, we investigate under which conditions the collection of the cluster points of empirical averages coincides with the interval $[\underline{\mu},\overline{\mu}]$, which is not always the case shown by some counterexamples. Generally, we have the following result (Theorem \ref {LLN-P-kappa}):

\emph{Suppose that $\mathbb{E}=\sup_{P\in\Theta}E_P$ with $\Theta$ weakly compact, and that the random variables $\{\xi_k\}$ are contained in $L^{\gamma}_\mathbb{E}(\Omega)$ for some $\gamma>1$. Then, for any $\mu\in[\underline{\mu},\overline{\mu}]$, we have $P_\mu\in\Theta$
such that
\begin {eqnarray}\label {sLL-intro}
\bar{\xi}_n\rightarrow \mu, \ P_\mu\emph{-a.s.}
\end {eqnarray}
}
We first prove that $\bar{\xi}_n$ converges to $\mu$ in probability (Lemma \ref {lemma-LLN-inP}), which is based on wLLN* given in Peng (2008) (\cite{P08b}), and then prove that $\bar{\xi}_n\rightarrow \mu, \ P_\mu\emph{-a.s.}$ (Theorem \ref {lemma-LLN-as}), which is based on the error estimates for wLLN* given in (\cite {Song19}).

Besides, we characterize the the triviality of the tail $\sigma$-algebra of i.i.d. random variables under a sublinear expectation $\mathbb{E}$ defined on the product space $\Omega=\mathbb{R}^{\mathbb{N}}$ (Section 4).

\emph{For a regular sublinear expectation $\mathbb{E}$ on $\Omega$, let $\Theta$ be the maximal set that represents $\mathbb{E}$, and let $\Theta^e$ be the set of  extreme points of  $\mathbb{E}$.}

$\textbf{1.}$ \emph{Assume the canonical process $\{\xi_k\}$  defined on $\Omega$ is i.i.d under $\mathbb{E}$, i.e., $\xi_{k+1}$ is independent of $\xi_1, \cdots, \xi_k$, for any $k\in\mathbb{N}$.}
\emph{\begin {itemize}
\item [i)] For any Borel function $\pi: \mathbb{R}^d\rightarrow[\underline{\mu}, \overline {\mu}]$, $d\in\mathbb{N}$, we can find a probability $P\in\Theta^e$ such that
\[\lim_{n\rightarrow \infty}\bar{\xi}_n =\pi(\xi_1, \cdots, \xi_d), \ P\emph{-a.s.}\]
So, generally, the tail $\sigma$-algebra is not necessarily trivial under $P\in \Theta$, even for $P\in \Theta^e$. Furthermore, probabilities respecting the above property can be chosen in any compact set $\Theta_0\subset\Theta$ that represents $\mathbb{E}$.
\item [ii)] But it can be shown that the tail $\sigma$-algebra is trivial under each $P\in\Theta^e$ which is  shift invariant, i.e., $P\circ\theta_n^{-1}=P$ for some $n\in\mathbb{N}$.  Write $\Theta^s$ for the set of all such probabilities.
\item [iii)] We can also show that $\Theta^s$ is big enough:
\begin {itemize}
\item [$\bullet$] $\mathbb{E}[\phi(\xi_1, \cdots, \xi_m)]=\sup\limits_{P\in\Theta^s} E_P[\phi(\xi_1, \cdots, \xi_m)]$, for any $m\in\mathbb{N}$ and $\phi\in C_{b,Lip}(\mathbb{R}^m)$;
\item [$\bullet$] $\bar{\xi}_n$ converges to a constant $m_P$ under each $P\in\Theta^s$, and $\{m_P\}_{P\in\Theta^s}$ is dense in $[\underline{\mu},\overline{\mu}]$. 
\end {itemize}
\end {itemize}
}

$\textbf{2.}$ \emph{If $\{\xi_k\}$ is $<$i.i.d under the sublinear expectation $\mathbb{E}$, i.e., $\xi_k$ is independent of $\xi_{k+1}, \cdots, \xi_{k+l}$, for any $k, l\in\mathbb{N}$, the tail $\sigma$-algebra is trivial under each $P\in\Theta^e$.
}

We refer to \cite {MM05}, \cite {Chen16} and \cite {CHW19} (and the references therein) for earlier results related to the strong law of large numbers for capacities. \cite {MM05} and \cite {Chen16} presented bounds of the cluster points similar to (\ref {sLLN-bounds-intro}) with notions of independence different from that in this paper. To define the independence between two random vectors, \cite {MM05} (resp. \cite {Chen16}) took the collection of indicator functions (resp. Borel measurable functions) as test functions. In this paper, test functions are chosen as bounded and continuous functions. \cite {CHW19} characterized the distribution of the cluster points in the interval $[\underline{\mu},\overline{\mu}]$ under the assumption that the capacities are continuous from above along Borel sets. This assumption is not needed in our proofs. The assumptions imposed in this paper are consistent with the $G$-framework introduced by Peng S. (\cite{P08b}, \cite{Peng-G}, \cite{P08a}, \cite{Peng-book}). We also refer to 
\cite {Zhang21} and \cite {GL21} for some recent results on the sLLN*.

\section {Basic notions of sublinear expectations}
Similar to linear expectations, the general framework of sublinear expectations is defined on measurable spaces.
\subsection {Sublinear expectations on measurable spaces}
Let $(\Omega, \mathcal{F})$ be a given measurable space and let $\Theta$ be a set of probabilities on $(\Omega, \mathcal {F})$.  Let $\mathcal{H}$ be a linear subspace of the space of $\mathcal {F}$-measurable functions on $\Omega$ with $\sup_{P\in\Theta} E_P[|\xi|]<\infty$.
Set $\mathbb{E}[\xi]=\sup_{P\in\Theta} E_P[\xi]$ for $\xi\in\mathcal{H}$. We call $(\Omega, \mathcal {H}, \mathbb{E})$ a sublinear expectation space.

\begin {definition} Let  $(\Omega, \mathcal{H}, \mathbb{E})$ be a sublinear expectation space. We say a random vector $\mathbf{ X}=(X_1,\cdots, X_m)\in\mathcal{H}^m$ is independent of $\mathbf{ Y}=(Y_1,\cdots, Y_n)\in\mathcal{H}^n$ if for any $\varphi\in C_{b,Lip}(\mathbb{R}^{n+m})$
\[\mathbb{E}[\varphi(\mathbf{ Y},\mathbf{ X})]=\mathbb{E}[\mathbb{E}[\varphi(\mathbf{ y}, \mathbf{ X})]|_{\mathbf{ y}=\mathbf{ Y}}].\]
\end {definition}

\begin {definition} Let $(\Omega, \mathcal{H}, \mathbb{E})$ and   $(\widetilde{\Omega}, \mathcal{\widetilde{H}}, \widetilde{\mathbb{E}})$ be two sublinear expectation spaces. A random vector $\mathbf{ X}$ in  $(\Omega, \mathcal{H}, \mathbb{E})$ is said to be identically distributed with another random vector $\mathbf{ Y}$ in $(\widetilde{\Omega}, \mathcal{\widetilde{H}}, \widetilde{\mathbf{E}})$ (write $\mathbf{ X}\overset{d}{=}\mathbf{ Y}$), if for any bounded and Lipschitz function $\varphi$, \[\mathbb{E}[\varphi(X)]=\widetilde{\mathbb{E}}[\varphi(Y)].\]

\end {definition}

In a sublinear expectation space,  the fact that $\mathbf{ X}$ is independent of $\mathbf{ Y}$ does not imply that $\mathbf{Y}$ is independent of $\mathbf{X}$.  A sequence of  random variables $(X_i)_{i\ge1}$ is called  i.i.d  if $X_{i+1}$ is independent from $(X_1,\cdots, X_i)$ and $X_{i+1}\overset{d}{=}X_i$ for each $i\in\mathbb{N}$, and called  $<$i.i.d  if $X_{i}$ is independent from $(X_{i+1}, \cdots, X_{i+j})$ and $X_{i+1}\overset{d}{=}X_i$ for each $i, j\in\mathbb{N}$. It is easy to see that these two notions do not make any difference for the weak law of large numbers. 

\subsection {Sublinear expectations on Polish spaces}

Some typical sublinear expectations, such as $G$-expectation,  are defined on complete separable metric spaces. Generally, let $\Omega$ be a Polish space. Denote by $C_b(\Omega)$ the collection of bounded and continuous functions and $\mathcal {M}_1(\Omega)$ the collection of Borel probability measures on $\Omega$.

For $\Theta\subset \mathcal {M}_1(\Omega)$, the associated sublinear expectation is defined by
\[\mathbb{E}[\xi]=\sup_{P\in\Theta}E_P[\xi], \ \xi \in C_b(\Omega).\]
We call $\Theta$ a set that represents $\mathbb{E}$.
For $p\ge1$, set $\|\xi\|_{p}:=\big(\mathbb{E}[|\xi|^p]\big)^{1/p}$, $\xi\in C_b(\Omega)$. Denote by $L^p_{\mathbb{E}}(\Omega)$ the completion of $C_b(\Omega)$ with respect to the norm $\|\cdot\|_p$. 

\begin {definition}  A sublinear expectation  $\mathbb{E}[\cdot]$ defined on a Polish space $\Omega$ is said to be regular if for each $\left\{X_{n}\right\}_{n=1}^{\infty}$ in $C_{b}(\Omega)$ such that $X_{n} \downarrow 0$ on $\Omega$,  we have $\mathbb{E}\left[X_{n}\right]\downarrow 0$.
\end {definition}

 It follows from Theorem 12 in \cite {DHP11} that $\mathbb{E}[\cdot]$ is regular if and only if $\hat{\Theta}$ is relatively compact. We also refer to \cite {DHP11} for the properties of the space $L^p_{\mathbb{E}}(\Omega)$.
 
\section {A strong law of large numbers under sublinear expectations}
 Let $\{\xi_k\}_{k\ge 1}$ be a sequence of independent and identically distributed random variables  under a sublinear expectation $\mathbb{E}=\sup_{P\in\Theta}E_P$. Set $\bar{\xi}_n=\frac{\xi_1+\cdots+\xi_n}{n}$, $\underline{\mu}=-\mathbb{E}[-\xi_1]$, $\overline{\mu}=\mathbb{E}[\xi_1]$. 
 
 \cite {Chen16} and \cite {CHW19} proved that the cluster points of $\{\bar{\xi}_n\}$ fall into $[\underline{\mu}, \overline{\mu}]$. First, we shall give a new proof to this result  based on the error estimates for wLLN* (\cite {Song19}). Then, we investigate under which conditions the collection of the cluster points of $\{\bar{\xi}_n\}$ coincides with the interval $[\underline{\mu},\overline{\mu}]$, which is not always the case shown by some counterexamples. 
 
 For a sequence of i.i.d random variables $\{\xi_k\}_{k\ge 1}$ under a sublinear expectation $\mathbb{E}=\sup_{P\in\Theta}E_P$ defined on a Polish space $\Omega$, we  show that each $\mu\in [\underline{\mu},\overline{\mu}]$ is a cluster point of the empirical averages supposing that the random variables $\{\xi_k\}$ are contained in $L^{\gamma}_\mathbb{E}(\Omega)$ for some $\gamma>1$  and $\Theta$ is weakly compact. 

 \subsection {The bounds of cluster points}
Below, we give an alternative proof to the bounds of the cluster points given in  \cite {Chen16} and \cite {CHW19}.

\begin {theorem}\label {LLN-bound} Let $\{\xi_k\}$ be a sequence of independent and identically distributed random variables under the sublinear expectation $\mathbb{E}=\sup\limits_{P\in\Theta}E_{P}$.  Suppose $\mathbb{E}[|\xi_1|^{1+\beta}]<\infty$ for some $\beta>0$. Set $\bar{\xi}_n=\frac{\xi_1+\cdots+\xi_n}{n}$. Then, for any $P\in\Theta$,  \[\underline{\mu}\le\liminf_{n\rightarrow\infty}\bar{\xi}_n\le\limsup_{n\rightarrow\infty}\bar{\xi}_n\le\overline{\mu}, \ P\emph{-a.s.,}\] where $\overline{\mu}=\mathbb{E}[\xi_1]$ and $\underline{\mu}=-\mathbb{E}[-\xi_1].$
\end {theorem}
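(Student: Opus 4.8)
The plan is to deduce the almost-sure bounds from the quantitative version of the weak law, namely the error estimate $\sup_{|\phi|_{Lip}\le 1}\big|\mathbb{E}[\phi(\bar\xi_n)]-\sup_{y\in[\underline\mu,\overline\mu]}\phi(y)\big|\le Cn^{-\beta/(1+\beta)}$. By symmetry it suffices to prove the upper bound $\limsup_n\bar\xi_n\le\overline\mu$, $P$-a.s.: the lower bound follows by applying the upper bound to the sequence $\{-\xi_k\}$, which is again i.i.d.\ under $\mathbb{E}$ (the defining relation for independence and the identical-distribution property are preserved under the coordinatewise map $x\mapsto -x$) with upper mean $\mathbb{E}[-\xi_1]=-\underline\mu$, giving $\liminf_n\bar\xi_n\ge\underline\mu$.

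To convert the error estimate into a probability bound, fix $\epsilon>0$ and choose a Lipschitz function $\phi_\epsilon$ with values in $[0,1]$ that vanishes on $(-\infty,\overline\mu+\epsilon/2]$, equals $1$ on $[\overline\mu+\epsilon,\infty)$, and has Lipschitz constant $2/\epsilon$. Since $\phi_\epsilon$ vanishes on $[\underline\mu,\overline\mu]$, applying the error estimate to $(\epsilon/2)\phi_\epsilon$ yields $\mathbb{E}[\phi_\epsilon(\bar\xi_n)]\le (2C/\epsilon)\,n^{-\beta/(1+\beta)}$. As $\mathbf 1_{[\overline\mu+\epsilon,\infty)}\le\phi_\epsilon$ and $E_P\le\mathbb{E}$,
\[P(\bar\xi_n\ge\overline\mu+\epsilon)\le E_P[\phi_\epsilon(\bar\xi_n)]\le\mathbb{E}[\phi_\epsilon(\bar\xi_n)]\le\frac{2C}{\epsilon}\,n^{-\beta/(1+\beta)}.\]
This single-$n$ bound is never summable, so I would pass to the subsequence $n_k=\lceil k^p\rceil$ with $p>(1+\beta)/\beta$, for which $\sum_k n_k^{-\beta/(1+\beta)}$ behaves like $\sum_k k^{-p\beta/(1+\beta)}<\infty$. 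Borel--Cantelli then gives $\limsup_k\bar\xi_{n_k}\le\overline\mu+\epsilon$, $P$-a.s., and intersecting over $\epsilon=1/m$ gives $\limsup_k\bar\xi_{n_k}\le\overline\mu$, $P$-a.s.; the symmetric argument shows that $\{\bar\xi_{n_k}\}$ is eventually bounded a.s.

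The main obstacle is to fill the gaps $n_k\le n<n_{k+1}$, and the reason it is delicate is that under a fixed $P\in\Theta$ the variables $\{\xi_k\}$ need not be independent, so classical maximal or concentration inequalities are unavailable. Writing $S_n=\xi_1+\cdots+\xi_n$ and $\Sigma_k^+=\sum_{j=n_k+1}^{n_{k+1}}\xi_j^+$, I would use $S_n-S_{n_k}\le\Sigma_k^+$ to obtain $\bar\xi_n\le\frac{n_k}{n}\bar\xi_{n_k}+\frac{1}{n_k}\Sigma_k^+$ for $n_k\le n<n_{k+1}$. The first term is at most $\bar\xi_{n_k}+(1-n_k/n_{k+1})|\bar\xi_{n_k}|$, which is $\le\overline\mu+2\epsilon$ eventually since $1-n_k/n_{k+1}\to0$ and $\bar\xi_{n_k}$ is bounded.

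For the second term I would work at the level of $\mathbb{E}$ rather than under $P$: combining the elementary convexity bound $\big(\sum_{j=1}^{N}a_j\big)^{1+\beta}\le N^{\beta}\sum_{j=1}^{N}a_j^{1+\beta}$ (valid for nonnegative $a_j$) with the subadditivity of $\mathbb{E}$ and the identical distribution gives $\mathbb{E}[(\Sigma_k^+)^{1+\beta}]\le N_k^{1+\beta}\,\mathbb{E}[(\xi_1^+)^{1+\beta}]$, where $N_k=n_{k+1}-n_k$. Markov's inequality under $\mathbb{E}$ then yields $P(\Sigma_k^+>\epsilon n_k)\le\epsilon^{-(1+\beta)}(N_k/n_k)^{1+\beta}\mathbb{E}[(\xi_1^+)^{1+\beta}]$, and since $N_k/n_k\sim p/k$ the bound is of order $k^{-(1+\beta)}$, hence summable because $1+\beta>1$. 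A last Borel--Cantelli application shows $\Sigma_k^+/n_k\le\epsilon$ eventually, a.s., so that $\bar\xi_n\le\overline\mu+3\epsilon$ for all $n_k\le n<n_{k+1}$ and all large $k$. Letting $\epsilon\downarrow0$ completes the upper bound, and the theorem follows.
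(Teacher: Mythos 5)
Your proof is correct, and its second half takes a genuinely different route from the paper's. The first half is essentially the paper's argument: the paper also converts the error estimate of \cite{Song19} into summable tail bounds along a sparse subsequence and applies Borel--Cantelli (it uses Chebyshev on $(\bar\xi_n-\overline\mu)^+$, which plays exactly the role of your cut-off $\phi_\epsilon$), and it also obtains the lower bound symmetrically. The difference is in how the gaps are filled. The paper takes a geometric subsequence $e_n=[\alpha^n]$, whose consecutive ratios do not tend to $1$; it must therefore represent $\bar\xi_n$ for $e_{s_n}\le n<e_{s_n+1}$ through the shifted block average $\bar\xi^{(n-e_{s_n})}_{e_{s_n}}$, prove the subsequence estimate for shifted averages $\bar\xi^{(\delta_n)}_{e_n}$, first treat the case $\xi_k\ge0$ in order to get $\xi^*_\infty=\sup_n|\bar\xi_n|<\infty$ a.s., and finally let $\alpha\downarrow1$. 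You instead take $n_k=\lceil k^p\rceil$ with $p>(1+\beta)/\beta$, whose consecutive ratios do tend to $1$, and dominate the entire gap by the single variable $\Sigma_k^+$, estimated by an elementary $(1+\beta)$-moment bound using only pointwise convexity, subadditivity of $\mathbb{E}$, and identical distribution (no independence and no LLN estimate enter this step). This costs you a choice of $p$ depending on $\beta$, but it buys: no shifted-block estimates, no separate nonnegative case, no $\alpha\downarrow1$ limit, and --- importantly --- a gap bound that is uniform in $n\in[n_k,n_{k+1})$, so a single Borel--Cantelli in $k$ suffices. By contrast, the paper's STEP 2 needs the shifted-average bound simultaneously for the roughly $(\alpha-1)\alpha^{s}$ different shifts occurring in the $s$-th gap, while its preliminary estimate is stated for one shift sequence $(\delta_n)$ with block length $e_n$ at index $n$; making that rigorous requires a union bound over each gap, which is delicate for small $\beta$, and your scheme sidesteps this issue entirely. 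One small point you should make explicit: identical distribution is defined through bounded Lipschitz test functions, so the identity $\mathbb{E}[(\xi_j^+)^{1+\beta}]=\mathbb{E}[(\xi_1^+)^{1+\beta}]$ needs a one-line truncation argument ($(x^+)^{1+\beta}\wedge M$ is bounded and Lipschitz; let $M\uparrow\infty$ by monotone convergence under each $P\in\Theta$ and exchange the suprema over $M$ and over $P$) --- though the paper is equally informal on this point, e.g.\ when it applies its STEP 1 to $\{|\xi_k|\}$.
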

\begin {proof}  For $m,n\in\mathbb{N}$, set $\bar{\xi}_n^{(m)}=\frac{\xi_{m+1}+\cdots+\xi_{m+n}}{n}$.  Let $e_n=[\alpha^n]$ for some $\alpha>1$ and let $\delta_n$ be a sequence of  positive integers. For $P\in \Theta$ and $\varepsilon>0$, it follows from Theorem 5.1 in \cite {Song19} that
\begin{equation}
\begin{aligned} \sum_{n=1}^{\infty} P\left\{\bar{\xi}^{(\delta_n)}_{e_n}-\overline{\mu}>\varepsilon\right\} & \le \frac{1}{\varepsilon} \sum_{n=1}^{\infty}\mathbb{E}[(\bar{\xi}^{(\delta_n)}_{e_n}-\overline{\mu})^+]\le \frac{c\mathbb{E}[|\xi_1|^{1+\beta}]}{\varepsilon} \sum_{n=1}^{\infty} \frac{1}{e_{n}^{\beta/2}} <\infty,
\end{aligned}
\end{equation}
which means that $\limsup\limits_{n\rightarrow\infty}\bar{\xi}^{(\delta_n)}_{e_n}\le\overline{\mu}, \ P\emph{-a.s.}$
Similarly, we can get $\liminf\limits_{n\rightarrow\infty}\bar{\xi}^{(\delta_n)}_{e_n}\ge\underline{\mu}, \ P\emph{-a.s.}$

For any $n\in\mathbb{N}$ large enough, there exists $s_n\in\mathbb{N}$ such that $e_{s_n}\le n< e_{s_n+1}$.

\noindent {\bf STEP 1.} The $\xi_n\ge0$ case

 Noting that $\bar{\xi}_n\le\frac{e_{s_n+1}}{e_{s_{n}}}\bar{\xi}_{e_{s_n+1}},$
we have $\limsup\limits_{n\rightarrow\infty}\bar{\xi}_{n}\le \alpha\overline{\mu}, \ P\emph{-a.s.}$
Letting $\alpha\downarrow1$
we have
\[\limsup_{n\rightarrow\infty}\bar{\xi}_{n}\le \overline{\mu}, \ P\emph{-a.s.}\] By similar arguments, we can prove that, for any $P\in\Theta$,
\[\liminf_{n\rightarrow\infty}\bar{\xi}_{n}\ge \underline{\mu}, \ P\emph{-a.s.}\]

\noindent {\bf STEP 2.}  The general case

It follows from STEP 1 that, for any $P\in\Theta$,
\[\limsup_{n\rightarrow\infty}\frac{|\xi_1|+\cdots+|\xi_n|}{n}\le \mathbb{E}[|\xi_1|], \ P\emph{-a.s.}\]
Noting that
\[ |\bar{\xi}_n|\le \frac{|\xi_1|+\cdots+|\xi_n|}{n},\]
we have
\[\limsup_{n\rightarrow\infty}|\bar{\xi}_n|\le \mathbb{E}[|\xi_1|], \ P\emph{-a.s.}\]
So $\xi^*_\infty:=\sup_{n}|\bar{\xi}_n|<\infty$, $P$-a.s. Noting that
\begin {eqnarray*}\bar{\xi}_n&=&\frac{\xi_{1}+\cdots+\xi_{n-e_{s_n}}}{n}+\frac{\xi_{n-e_{s_n}+1}+\cdots+\xi_{n}}{n}\\
&=&\frac{n-e_{s_n}}{n}\bar{\xi}_{n-e_{s_n}}+\frac{e_{s_n}}{n}(\bar{\xi}^{(n-e_{s_n})}_{e_{s_n}}-\overline{\mu})+\frac{e_{s_n}-n}{n}\overline{\mu}+\overline{\mu}\end {eqnarray*}
and $|\frac{n-e_{s_n}}{n}\bar{\xi}_{n-e_{s_n}}|\le\frac{n-e_{s_n}}{n}\xi^*_{\infty}$, $\limsup\limits_{n\rightarrow\infty}\frac{n-e_{s_n}}{n}\le\frac{\alpha-1}{\alpha}$, we have
\[\limsup_{n\rightarrow\infty}\bar{\xi}_n\le \frac{\alpha-1}{\alpha}(\xi^*_{\infty}+|\overline{\mu}|)+\overline{\mu}, \ P\emph{-a.s.}\]
Letting $\alpha\downarrow 1$, we get
\[\limsup_{n\rightarrow\infty}\bar{\xi}_n\le \overline{\mu}, \ P\emph{-a.s.}\]
On the other hand, noting that
\begin {eqnarray*}\bar{\xi}_n=\frac{n-e_{s_n}}{n}\bar{\xi}_{n-e_{s_n}}+\frac{e_{s_n}}{n}(\bar{\xi}^{(n-e_{s_n})}_{e_{s_n}}-\underline{\mu})+\frac{e_{s_n}-n}{n}\underline{\mu}+\underline{\mu}\end {eqnarray*}
 we have
\[\liminf_{n\rightarrow\infty}\bar{\xi}_n\ge -\frac{\alpha-1}{\alpha}(\xi^*_{\infty}+|\underline{\mu}|)+\underline{\mu}, \ P\emph{-a.s.}\]
and consequently, we get
\[\liminf_{n\rightarrow\infty}\bar{\xi}_n\ge \underline{\mu}, \ P\emph{-a.s.}\]
\end {proof}

\subsection {Convergence of empirical averages to cluster points}

In this subsection, we investigate under which congditions the mean interval $[\underline{\mu},\overline{\mu}]$ is filled up with cluster points of the empirical averages. First, we present several lemmas.

For $\kappa\in\{\underline{\mu}, \overline{\mu}\}^{\mathbb{N}}$, set $\mu_n=\frac{\kappa_1+\cdots+\kappa_n}{n}$, and designate $N^+_{\kappa}(n)=\{i\le n \ | \ \kappa_i=\overline{\mu}\}$,  $N^-_{\kappa}(n)=\{i\le n \ | \ \kappa_i=\underline{\mu}\}$,  and $k_n, l_n$  the numbers of elements in $N^+_{\kappa}(n)$ and $N^-_{\kappa}(n)$, respectively.

\begin {lemma} \label {lemma-LLN-esti} Let $\{\xi_k\}$ be a sequence of random variables in a sublinear expectation space $(\Omega, \mathcal{H}, \mathbb{E})$ with $\mathbb{E}=\sup_{P\in\Theta}E_P$. Set $\bar{\xi}_n=\frac{\xi_1+\cdots+\xi_n}{n}$. Then, for any $P\in\Theta$ with $E_P[\xi_n]=\kappa_n$, we have, with the convention $0/0=0$, 
\[P[|\bar{\xi}_n-\mu_n|>\varepsilon]\le \frac{4}{\varepsilon}\frac{k_n}{n}\mathbb{E}[(\frac{\Sigma_{i\in N^+_{\kappa}(n)}\xi_i}{k_n}-\overline{\mu})^+]+\frac{4}{\varepsilon}\frac{l_n}{n}\mathbb{E}[(\frac{\Sigma_{i\in N^-_{\kappa}(n)}\xi_i}{l_n}-\underline{\mu})^-].\]

\end {lemma}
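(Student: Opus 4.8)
The plan is to split the centered empirical average along the two index sets $N^+_{\kappa}(n)$ and $N^-_{\kappa}(n)$, apply a union bound together with Markov's inequality under $P$, and then exploit the fact that $P$ centers each of the two partial sums to pass from absolute values to the one-sided quantities appearing in the statement. First I would write, using $n\mu_n = k_n\overline{\mu} + l_n\underline{\mu}$,
\[
\bar{\xi}_n - \mu_n = \frac{k_n}{n}\Big(\frac{\Sigma_{i\in N^+_{\kappa}(n)}\xi_i}{k_n} - \overline{\mu}\Big) + \frac{l_n}{n}\Big(\frac{\Sigma_{i\in N^-_{\kappa}(n)}\xi_i}{l_n} - \underline{\mu}\Big) =: \frac{k_n}{n}A_n + \frac{l_n}{n}B_n,
\]
where the convention $0/0=0$ makes $A_n$ and $B_n$ well defined when $k_n$ or $l_n$ vanishes; in that degenerate case the corresponding term disappears anyway, since $\frac{k_n}{n}A_n = \frac{1}{n}(\Sigma_{i\in N^+_{\kappa}(n)}\xi_i - k_n\overline{\mu}) = 0$ when $k_n=0$. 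Because $E_P[\xi_i]=\kappa_i$, linearity of $E_P$ gives $E_P[A_n]=0$ and $E_P[B_n]=0$ whenever $k_n,l_n>0$.

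Next, from $|\bar{\xi}_n-\mu_n| \le \frac{k_n}{n}|A_n| + \frac{l_n}{n}|B_n|$, the event $\{|\bar{\xi}_n-\mu_n|>\varepsilon\}$ is contained in the union $\{\frac{k_n}{n}|A_n|>\varepsilon/2\}\cup\{\frac{l_n}{n}|B_n|>\varepsilon/2\}$, so the union bound and Markov's inequality under the probability $P$ yield
\[
P[|\bar{\xi}_n-\mu_n|>\varepsilon] \le \frac{2}{\varepsilon}\frac{k_n}{n}E_P[|A_n|] + \frac{2}{\varepsilon}\frac{l_n}{n}E_P[|B_n|].
\]

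The decisive step, and essentially the only one requiring an observation, is the replacement of the absolute values by one-sided quantities. Since $E_P[A_n]=0$, the decompositions $|A_n|=A_n^+ + A_n^-$ and $A_n = A_n^+ - A_n^-$ force $E_P[A_n^+]=E_P[A_n^-]$, whence $E_P[|A_n|]=2E_P[A_n^+]$; by the same argument $E_P[|B_n|]=2E_P[B_n^-]$. Finally, as $A_n^+\ge0$, $B_n^-\ge0$ and $P\in\Theta$, I dominate $E_P[A_n^+]\le\mathbb{E}[A_n^+]$ and $E_P[B_n^-]\le\mathbb{E}[B_n^-]$. Substituting these into the previous display produces exactly the claimed bound with the constant $4/\varepsilon$.

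I expect no genuine obstacle, as each step is elementary; the point deserving care is the centering identity turning $E_P[|A_n|]$ into $2E_P[A_n^+]$, since this is precisely what yields the one-sided expressions $(\cdot)^+$ and $(\cdot)^-$ rather than $|\cdot|$. The asymmetric choice—$(\cdot)^+$ over the $\overline{\mu}$-block $N^+_{\kappa}(n)$ and $(\cdot)^-$ over the $\underline{\mu}$-block $N^-_{\kappa}(n)$—is deliberate, being the form that the error estimate for wLLN* from \cite{Song19} later controls, so I would keep that orientation throughout.
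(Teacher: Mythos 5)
Your proposal is correct and follows essentially the same route as the paper: the same decomposition of $\bar{\xi}_n-\mu_n$ into the $N^+_{\kappa}(n)$ and $N^-_{\kappa}(n)$ blocks, a union bound with Markov's inequality under $P$, the centering identity $E_P[|A_n|]=2E_P[A_n^+]$ (which the paper invokes implicitly via ``Chebyshev's inequality as well as the fact that $E_P[\cdot]=\overline{\mu}$''), and finally domination of $E_P$ by $\mathbb{E}$. Your write-up merely makes explicit the factor-of-two step that the paper leaves compressed.
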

\begin {proof} \begin {eqnarray*}
P[|\bar{\xi}_n-\mu_n|>\varepsilon]&=&P[|\frac{k_n}{n}(\frac{\Sigma_{i\in N^+_{\kappa}(n)}\xi_i}{k_n}-\overline{\mu})+\frac{l_n}{n}(\frac{\Sigma_{i\in N^-_{\kappa}(n)}\xi_i}{l_n}-\underline{\mu})|>\varepsilon]\\
&\le& P[\frac{k_n}{n}|\frac{\Sigma_{i\in N^+_{\kappa}(n)}\xi_i}{k_n}-\overline{\mu}|>\varepsilon/2]+P[\frac{l_n}{n}|\frac{\Sigma_{i\in N^-_{\kappa}(n)}\xi_i}{l_n}-\underline{\mu}|>\varepsilon/2]\\
&\le&\frac{4}{\varepsilon}\frac{k_n}{n}E_P[(\frac{\Sigma_{i\in N^+_{\kappa}(n)}\xi_i}{k_n}-\overline{\mu})^+]+\frac{4}{\varepsilon}\frac{l_n}{n}E_P[(\frac{\Sigma_{i\in N^-_{\kappa}(n)}\xi_i}{l_n}-\underline{\mu})^-]\\
&\le&\frac{4}{\varepsilon}\frac{k_n}{n}\mathbb{E}[(\frac{\Sigma_{i\in N^+_{\kappa}(n)}\xi_i}{k_n}-\overline{\mu})^+]+\frac{4}{\varepsilon}\frac{l_n}{n}\mathbb{E}[(\frac{\Sigma_{i\in N^-_{\kappa}(n)}\xi_i}{l_n}-\underline{\mu})^-].
\end {eqnarray*} The second inequality follows from Chebyshev's inequality as well as the fact that 
\[ E_P[\frac{\Sigma_{i\in N^+_{\kappa}(n)}\xi_i}{k_n}]=\overline{\mu} \ \textmd{and}  \ E_P[\frac{\Sigma_{i\in N^-_{\kappa}(n)}\xi_i}{k_n}]=\underline{\mu}.\]
\end {proof}

For $\mu\in[\underline{\mu}, \overline{\mu}]$, set
\begin {eqnarray}\label {kappa}
\begin {split}
\kappa_1(\mu)=\left\{\begin{array}{ll}{\overline{\mu}} & { \ \ \text { if } \mu\ge\frac{\overline{\mu}+\underline{\mu}}{2},} \\ {\underline{\mu}} & { \ \ \text { if } \mu<\frac{\overline{\mu}+\underline{\mu}}{2},}\end{array}\right. \hskip 0.3cm \mathrm{and} \hskip 0.3cm 
\kappa_{n+1}(\mu)=\left\{\begin{array}{ll}{\overline{\mu}} & { \ \ \text { if } \mu\ge\mu_n,} \\ {\underline{\mu}} & { \ \ \text { if } \mu<\mu_n,}\end{array}\right.
\end {split}
\end {eqnarray}
where $\mu_n=(\kappa_1+\cdots+\kappa_n)/n$.  

\begin {lemma}Let $N^+_{\kappa}(n)=\{i\le n \ | \ \kappa_i(\mu)=\overline{\mu}\}$,  $N^-_{\kappa}(n)=\{i\le n \ | \ \kappa_i(\mu)=\underline{\mu}\}$,  and $k_n, l_n$  be the numbers of elements in $N^+_{\kappa}(n)$ and $N^-_{\kappa}(n)$, respectively. Then, we have
\begin {eqnarray}\label {kappa-esti}
|\mu_n-\mu|\le\frac{\overline{\mu}-\underline{\mu}}{n}.
\end {eqnarray} Equivalently, \[|\frac{k_n}{n}-\frac{\mu-\underline{\mu}}{\overline{\mu}-\underline{\mu}}|\le \frac{1}{n}, \ |\frac{l_n}{n}-\frac{\overline{\mu}-\mu}{\overline{\mu}-\underline{\mu}}|\le \frac{1}{n}.\]
\end {lemma}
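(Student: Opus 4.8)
The plan is to establish the inequality $|\mu_n-\mu|\le(\overline{\mu}-\underline{\mu})/n$ directly by induction on $n$, and then to read off the two equivalent estimates for $k_n/n$ and $l_n/n$ from a single linear identity. Throughout I would exploit the recursive construction (\ref{kappa}) in the form of the one-step update
\[
\mu_{n+1}=\frac{n\mu_n+\kappa_{n+1}(\mu)}{n+1},
\]
together with the defining rule that $\kappa_{n+1}(\mu)=\overline{\mu}$ precisely when $\mu\ge\mu_n$ and $\kappa_{n+1}(\mu)=\underline{\mu}$ precisely when $\mu<\mu_n$.

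For the base case $n=1$ I would note that the first rule in (\ref{kappa}) forces $\mu_1\in\{\underline{\mu},\overline{\mu}\}$ to be whichever endpoint lies on the same side of the midpoint $(\overline{\mu}+\underline{\mu})/2$ as $\mu$, so that $|\mu_1-\mu|\le(\overline{\mu}-\underline{\mu})/2\le\overline{\mu}-\underline{\mu}$. For the inductive step, assuming $|\mu_n-\mu|\le(\overline{\mu}-\underline{\mu})/n$, I would write
\[
\mu_{n+1}-\mu=\frac{n(\mu_n-\mu)+(\kappa_{n+1}(\mu)-\mu)}{n+1}
\]
and split into the two cases. If $\mu\ge\mu_n$ then $\kappa_{n+1}(\mu)=\overline{\mu}$, so that $\mu_n-\mu\le0$ while $0\le\overline{\mu}-\mu\le\overline{\mu}-\underline{\mu}$; combining these with the induction hypothesis in the form $\mu_n-\mu\ge-(\overline{\mu}-\underline{\mu})/n$ bounds the numerator between $-(\overline{\mu}-\underline{\mu})$ and $\overline{\mu}-\underline{\mu}$. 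The case $\mu<\mu_n$, where $\kappa_{n+1}(\mu)=\underline{\mu}$ and $-(\overline{\mu}-\underline{\mu})\le\underline{\mu}-\mu\le0$, is symmetric. Dividing by $n+1$ then yields $|\mu_{n+1}-\mu|\le(\overline{\mu}-\underline{\mu})/(n+1)$, closing the induction and giving (\ref{kappa-esti}).

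To obtain the equivalent statement I would use the two bookkeeping identities $k_n+l_n=n$ and $\kappa_1+\cdots+\kappa_n=k_n\overline{\mu}+l_n\underline{\mu}$, which together give
\[
\mu_n=\underline{\mu}+\frac{k_n}{n}(\overline{\mu}-\underline{\mu}).
\]
Hence $\frac{k_n}{n}-\frac{\mu-\underline{\mu}}{\overline{\mu}-\underline{\mu}}=\frac{\mu_n-\mu}{\overline{\mu}-\underline{\mu}}$, so dividing (\ref{kappa-esti}) by $\overline{\mu}-\underline{\mu}$ produces the bound $1/n$ for $k_n/n$; the bound for $l_n/n$ then follows from $l_n/n=1-k_n/n$ together with $\frac{\overline{\mu}-\mu}{\overline{\mu}-\underline{\mu}}=1-\frac{\mu-\underline{\mu}}{\overline{\mu}-\underline{\mu}}$, which makes the two deviations negatives of one another.

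I expect no serious obstacle: the content is entirely the bookkeeping in the inductive step, where the key point is that the greedy rule (\ref{kappa}) always appends the endpoint that moves $\mu_{n+1}$ back toward $\mu$, so the overshoot is controlled by a single term of magnitude at most $\overline{\mu}-\underline{\mu}$. The only mild care is the degenerate case $\overline{\mu}=\underline{\mu}$, in which every $\kappa_i(\mu)$ equals the common value $\mu$ and both sides of (\ref{kappa-esti}) vanish, the equivalent formulation being read with the convention $0/0=0$.
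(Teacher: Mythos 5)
Your proof is correct and follows essentially the same route as the paper's: induction on $n$, with the key point being that the greedy rule makes $\kappa_{n+1}(\mu)-\mu$ and $\mu_n-\mu$ have opposite signs, so the two contributions in the one-step update partially cancel and the deviation is bounded by $(\overline{\mu}-\underline{\mu})/(n+1)$. The only additions on your side are the explicit derivation of the equivalent $k_n/n$, $l_n/n$ bounds via $\mu_n=\underline{\mu}+\frac{k_n}{n}(\overline{\mu}-\underline{\mu})$ and the degenerate case $\overline{\mu}=\underline{\mu}$, both of which the paper leaves implicit.
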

\begin {proof}
Clearly, (\ref {kappa-esti}) holds for $n=1$. Assume that (\ref{kappa-esti}) holds for $i\le n$.
For the $\mu\ge\mu_n$ case, \[\mu_{n+1}-\mu=\frac{n}{n+1}(\mu_n-\mu)+\frac{1}{n+1}(\overline{\mu}-\mu).\] Noting that $|\frac{n}{n+1}(\mu_n-\mu)|\le\frac{1}{n+1}(\overline{\mu}-\underline{\mu})$, $|\frac{1}{n+1}(\overline{\mu}-\mu)|\le\frac{1}{n+1}(\overline{\mu}-\underline{\mu})$ and $(\mu_n-\mu)(\overline{\mu}-\mu)\le0$, we have
\[|\mu_{n+1}-\mu|\le\frac{1}{n+1}(\overline{\mu}-\underline{\mu}).\] The $\mu<\mu_n$ case can be proved similarly.  \end {proof}

\begin {lemma} \label {lemma-LLN-inP} Let $\{\xi_k\}$ be a sequence of i.i.d random variables in a sublinear expectation space $(\Omega, \mathcal{H}, \mathbb{E})$ with $\mathbb{E}=\sup_{P\in\Theta}E_P$. Set $\bar{\xi}_n=\frac{\xi_1+\cdots+\xi_n}{n}$. Assume $\mathbb{E}[|\xi_1|^{1+\beta}]<\infty$ for some $\beta>0$. For $\mu\in[\underline{\mu}, \overline{\mu}]$ and $P\in\Theta$ with $E_P[\xi_n]=\kappa_n(\mu)$, $n\in\mathbb{N}$, we have, for any $\varepsilon>0$,
\[P[|\bar{\xi}_n-\mu|>\varepsilon]\rightarrow0\]
as $n$ goes to $+\infty$.

\end {lemma}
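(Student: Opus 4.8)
The plan is to combine the probability estimate from Lemma \ref{lemma-LLN-esti} with the tail bound from the second lemma and the error estimate for wLLN*. The key observation is that the displayed inequality in Lemma \ref{lemma-LLN-esti} bounds $P[|\bar{\xi}_n-\mu_n|>\varepsilon]$ by two terms, each of which is (up to the factor $\frac{4}{\varepsilon}$ and the weights $k_n/n, l_n/n \le 1$) an expression of the form $\mathbb{E}[(\bar{\eta}_m - \overline{\mu})^+]$ or $\mathbb{E}[(\bar{\eta}_m - \underline{\mu})^-]$ where $\bar{\eta}_m$ is an empirical average of $m$ of the i.i.d.\ random variables (here $m=k_n$ or $m=l_n$). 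So first I would show that $P[|\bar{\xi}_n-\mu|>\varepsilon]\to0$ follows once I show $P[|\bar{\xi}_n-\mu_n|>\varepsilon/2]\to0$, since by estimate (\ref{kappa-esti}) we have $|\mu_n-\mu|\le\frac{\overline{\mu}-\underline{\mu}}{n}\to0$, so $|\mu_n-\mu|<\varepsilon/2$ for all $n$ large enough and the events $\{|\bar{\xi}_n-\mu|>\varepsilon\}$ are eventually contained in $\{|\bar{\xi}_n-\mu_n|>\varepsilon/2\}$.

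Next I would control the two expectation terms. Since the $\{\xi_k\}$ are i.i.d.\ under $\mathbb{E}$, the sub-average $\frac{1}{k_n}\Sigma_{i\in N^+_\kappa(n)}\xi_i$ over any fixed index set of size $k_n$ has the same sublinear distribution as $\bar{\xi}_{k_n}=\frac{\xi_1+\cdots+\xi_{k_n}}{k_n}$, so $\mathbb{E}[(\frac{1}{k_n}\Sigma_{i\in N^+_\kappa(n)}\xi_i-\overline{\mu})^+]=\mathbb{E}[(\bar{\xi}_{k_n}-\overline{\mu})^+]$, and similarly for the negative part. Now the error estimate for wLLN* (Theorem 5.1 in \cite{Song19}, as used in the proof of Theorem \ref{LLN-bound}) gives $\mathbb{E}[(\bar{\xi}_m-\overline{\mu})^+]\le c\,\mathbb{E}[|\xi_1|^{1+\beta}]\,m^{-\beta/2}\to0$ as $m\to\infty$, and likewise $\mathbb{E}[(\bar{\xi}_m-\underline{\mu})^-]\to0$. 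Thus each term tends to $0$ provided the corresponding count $k_n$ (resp.\ $l_n$) tends to $+\infty$, which is where the tail bound from the second lemma enters.

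The remaining point is to handle the boundary cases $\mu=\underline{\mu}$ and $\mu=\overline{\mu}$, where one of $k_n/n, l_n/n$ tends to $0$; this is the step I expect to require a little care. By the second lemma, $k_n/n\to\frac{\mu-\underline{\mu}}{\overline{\mu}-\underline{\mu}}$ and $l_n/n\to\frac{\overline{\mu}-\mu}{\overline{\mu}-\underline{\mu}}$. When $\mu\in(\underline{\mu},\overline{\mu})$ both limits are strictly positive, so both $k_n,l_n\to\infty$ and both weights stay bounded, giving $P[|\bar{\xi}_n-\mu_n|>\varepsilon/2]\to0$ directly. When $\mu=\overline{\mu}$ we have $l_n/n\to0$; here I would note that either $l_n$ stays bounded, in which case the factor $\frac{l_n}{n}\to0$ kills the second term regardless of whether $\mathbb{E}[(\frac{1}{l_n}\Sigma_{i\in N^-_\kappa(n)}\xi_i-\underline{\mu})^-]$ stays bounded (it does, being dominated by $\mathbb{E}[|\xi_1|]+|\underline{\mu}|$ since $l_n\ge1$), or $l_n\to\infty$ along a subsequence, in which case the wLLN* error estimate sends the expectation to $0$. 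Either way the product $\frac{l_n}{n}\mathbb{E}[(\frac{1}{l_n}\Sigma_{i\in N^-_\kappa(n)}\xi_i-\underline{\mu})^-]\to0$, and since $k_n/n\to1$ with $k_n\to\infty$ the first term also vanishes; the case $\mu=\underline{\mu}$ is symmetric. Collecting the two vanishing terms yields $P[|\bar{\xi}_n-\mu_n|>\varepsilon/2]\to0$, which completes the proof.
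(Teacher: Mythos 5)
Your proposal is correct and follows essentially the same route as the paper: reduce $|\bar{\xi}_n-\mu|$ to $|\bar{\xi}_n-\mu_n|$ via $|\mu_n-\mu|\le(\overline{\mu}-\underline{\mu})/n$, then make the two terms in the bound of Lemma \ref{lemma-LLN-esti} vanish by the law of large numbers under $\mathbb{E}$. The only differences are minor: the paper cites Peng's wLLN* where you invoke the quantitative estimate of \cite{Song19}, and you explicitly treat the boundary cases $\mu\in\{\underline{\mu},\overline{\mu}\}$ (where $k_n$ or $l_n$ stays bounded and one must use the vanishing weight together with a uniform bound on the expectation), a point the paper's proof passes over silently.
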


\begin {proof} 
It follows from Lemma \ref {lemma-LLN-esti} that
\begin {eqnarray}\label {Esti-m}
P[|\bar{\xi}_n-\mu_n|>\varepsilon]\le \frac{4}{\varepsilon}\frac{k_n}{n}\mathbb{E}[(\frac{\Sigma_{i\in N^+_{\kappa}(n)}\xi_i}{k_n}-\overline{\mu})^+]+\frac{4}{\varepsilon}\frac{l_n}{n}\mathbb{E}[(\frac{\Sigma_{i\in N^-_{\kappa}(n)}\xi_i}{l_n}-\underline{\mu})^-],
\end {eqnarray}
which converges to 0 as $n$ goes to infinity by the wLLN* in \cite {P08b}.  Note that
\[P[|\bar{\xi}_n-\mu|>\varepsilon]\le P[|\bar{\xi}_n-\mu_n|>\varepsilon-|\mu_n-\mu|].\] Since $|\mu_n-\mu|\le\frac{\overline{\mu}-\underline{\mu}}{n}$, we have,  for $n$ large enough, $|\mu_n-\mu|<\frac{\varepsilon}{2}$, and consequently,
\[P[|\bar{\xi}_n-\mu|>\varepsilon]\le P[|\bar{\xi}_n-\mu_n|>\varepsilon/2].\]
The desired result follows from (\ref {Esti-m}).
\end {proof}

Lemma \ref {lemma-LLN-inP} proves the convergence in probability based on the weak law of large numbers under sublinear expectations (wLLN*). By the error estimates of wLLN* given by \cite{Song19}, we can prove that $\bar{\xi}_n$ converges to $\mu$ almost surely under $P\in\Theta$ satisfying the property in Lemma \ref {lemma-LLN-inP}.

\begin {lemma} \label {lemma-LLN-as} Assume the conditions in Lemma \ref {lemma-LLN-inP} hold. For $\mu\in[\underline{\mu}, \overline{\mu}]$ and $P\in\Theta$ with $E_P[\xi_n]=\kappa_n(\mu)$, $n\in\mathbb{N}$, we have \[\bar{\xi}_n\rightarrow \mu, \ P\emph{-a.s.}\]
\end {lemma}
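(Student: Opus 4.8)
The plan is to upgrade the convergence in probability of Lemma \ref{lemma-LLN-inP} to almost sure convergence by the classical method of exponentially growing subsequences followed by a gap-filling argument, in complete parallel with the proof of Theorem \ref{LLN-bound}. The engine throughout is the following observation about the specific $P$ at hand: since $E_P[\xi_i]=\kappa_i(\mu)$ exactly, the two sub-collections $\{\xi_i : i\in N^+_\kappa(n)\}$ and $\{\xi_i : i\in N^-_\kappa(n)\}$ are, under $P$, centered precisely at $\overline{\mu}$ and $\underline{\mu}$. These sub-collections are still i.i.d. under $\mathbb{E}$ (independence and identical distribution pass to subsequences, and the same is true of $\{|\xi_i|\}$), so Theorem 5.1 in \cite{Song19} bounds $\mathbb{E}[(\frac{1}{k_n}\sum_{i\in N^+_\kappa(n)}\xi_i-\overline{\mu})^+]$ and $\mathbb{E}[(\frac{1}{l_n}\sum_{i\in N^-_\kappa(n)}\xi_i-\underline{\mu})^-]$ by terms of order $k_n^{-\beta/2}$ and $l_n^{-\beta/2}$. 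Because the corresponding $P$-means are exactly $\overline{\mu}$ and $\underline{\mu}$, each one-sided $\mathbb{E}$-bound in fact controls the full two-sided deviation under $P$ (the positive and negative parts of a $P$-centered quantity have equal $P$-mean), so Lemma \ref{lemma-LLN-esti} becomes a genuine $L^1(P)$ rate.

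First I would establish convergence along the subsequence $e_s=[\alpha^s]$, $\alpha>1$. Applying Lemma \ref{lemma-LLN-esti} at $n=e_s$ and using that $k_{e_s},l_{e_s}$ are comparable to $e_s$ when $\underline{\mu}<\mu<\overline{\mu}$ (via $|\frac{k_n}{n}-\frac{\mu-\underline{\mu}}{\overline{\mu}-\underline{\mu}}|\le\frac{1}{n}$), one gets $\sum_s P[|\bar{\xi}_{e_s}-\mu_{e_s}|>\varepsilon]\le C\sum_s e_s^{-\beta/2}<\infty$. Borel--Cantelli then gives $\bar{\xi}_{e_s}-\mu_{e_s}\to 0$, $P$\emph{-a.s.}, and since $|\mu_{e_s}-\mu|\le(\overline{\mu}-\underline{\mu})/e_s\to0$ by (\ref{kappa-esti}), we conclude $\bar{\xi}_{e_s}\to\mu$, $P$\emph{-a.s.} The exponential spacing makes this step succeed for every $\beta>0$, with no restriction on the moment exponent.

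The harder part is the gap-filling, i.e. passing from $\{e_s\}$ to all $n\in[e_s,e_{s+1})$. As in Theorem \ref{LLN-bound}, I would apply that theorem to the sequence $\{|\xi_i|\}$ (i.i.d. under $\mathbb{E}$, with finite $(1+\beta)$-moment) to obtain $\xi^*_\infty:=\sup_n|\bar{\xi}_n|<\infty$, $P$\emph{-a.s.}, and then decompose $\bar{\xi}_n$ as $\frac{e_s}{n}\bar{\xi}_{e_s}$ plus a centered shifted-block term of the form $\frac{e_s}{n}(\bar{\xi}^{(n-e_s)}_{e_s}-(\cdots))$ plus a remainder bounded by $\frac{\alpha-1}{\alpha}(\xi^*_\infty+|\mu|)+o(1)$; controlling the block term and letting $\alpha\downarrow1$ would yield $\bar{\xi}_n\to\mu$, $P$\emph{-a.s.} The step I expect to be the main obstacle is precisely the almost sure control of these moving-window block averages uniformly over the gap: the windows ending at the various $n\in[e_s,e_{s+1})$ all share the common length $e_s$, so a naive union bound over them costs a factor $\sim(\alpha-1)e_s$ and destroys summability unless $\beta>2$. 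To cover all $\beta>0$ one must instead exploit that each shifted block $\bar{\xi}^{(\delta)}_{e_s}$ is identically distributed under $\mathbb{E}$ to $\bar{\xi}_{e_s}$ and is $P$-centered at the appropriate mean, and combine the resulting $L^1(P)$-bounds through a maximal (telescoping) control of the block partial sums rather than through a union bound. Finally, the boundary cases $\mu\in\{\underline{\mu},\overline{\mu}\}$, where one of $N^{\pm}_\kappa(n)$ is empty, follow from the same scheme: there the one-sided half of the statement is supplied by Theorem \ref{LLN-bound} (namely $\limsup\bar{\xi}_n\le\overline{\mu}$, resp. $\liminf\bar{\xi}_n\ge\underline{\mu}$), while the matching reverse inequality comes from the centered $L^1(P)$-estimate, since the mean equals $\overline{\mu}$ (resp. $\underline{\mu}$) at every index.
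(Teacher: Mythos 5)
Your Step 1 (convergence along the geometric subsequence $e_s=[\alpha^s]$) is the paper's own first step, essentially verbatim: Lemma \ref{lemma-LLN-esti}, the rate from Theorem 5.1 of \cite{Song19}, Borel--Cantelli, and the bound $|\mu_n-\mu|\le(\overline{\mu}-\underline{\mu})/n$; your observation that exact $P$-centering turns the one-sided $\mathbb{E}$-bounds into two-sided bounds under $P$ is precisely the mechanism inside Lemma \ref{lemma-LLN-esti}. One simplification you missed: the restriction to interior $\mu$ (so that $k_{e_s}, l_{e_s}$ are comparable to $e_s$) is unnecessary, since $k_n\le n$ gives $\frac{k_{e_s}}{e_s}k_{e_s}^{-\beta/(1+\beta)}\le e_s^{-\beta/(1+\beta)}$ (with the convention $0/0=0$); this is uniform in $\mu$, so the separate treatment of $\mu\in\{\underline{\mu},\overline{\mu}\}$ at the end of your proposal is superfluous.

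The genuine gap is in the gap-filling step, and it is the step you yourself flag but never carry out: the $P$-a.s. control of the sliding blocks $\bar{\xi}^{(n-e_s)}_{e_s}$ uniformly over $n\in[e_s,e_{s+1})$. Your diagnosis that a union bound costs a factor of order $(\alpha-1)e_s$ per scale and destroys summability for small $\beta$ is correct, but the ``maximal (telescoping) control of the block partial sums'' you invoke in its place is never defined, and it is doubtful it exists in the form you suggest: the only information available about $P$ is the sequence of marginal means $E_P[\xi_i]=\kappa_i(\mu)$ --- there is no independence and no conditional-mean structure under $P$ --- so there is no martingale or orthogonality structure on which a Kolmogorov-- or Doob--type maximal inequality could run. (To be fair, the paper's own proof uses the same sliding-window decomposition and simply asserts $\frac{e_{s_n}}{n}(\bar{\xi}^{(n-e_{s_n})}_{e_{s_n}}-\mu)\to0$, $P$-a.s., on the strength of the display at the start of the proof of Theorem \ref{LLN-bound}, which treats one shift per scale; so your concern is a real one.) A way to actually close the step, with no maximal inequality and no union bound, is to anchor the blocks at $e_s$ instead of at $n$ and dominate by absolute values: writing $S_m=\xi_1+\cdots+\xi_m$, for $e_s\le n<e_{s+1}$,
\[
\Big|\bar{\xi}_n-\frac{e_s}{n}\bar{\xi}_{e_s}\Big|=\frac{|S_n-S_{e_s}|}{n}\le\frac{1}{e_s}\sum_{i=e_s+1}^{e_{s+1}}|\xi_i|,
\]
and the right-hand side involves a single block per scale of the sequence $\{|\xi_i|\}$, which is again i.i.d.\ under $\mathbb{E}$ with finite $(1+\beta)$-moment. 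The one-block-per-scale Borel--Cantelli argument then yields $\limsup_s\frac{1}{e_s}\sum_{i=e_s+1}^{e_{s+1}}|\xi_i|\le(\alpha-1)\mathbb{E}[|\xi_1|]$, $P$-a.s., whence, combining with Step 1 and $\frac{e_s}{n}\in[\frac{e_s}{e_{s+1}},1]$,
\[
\limsup_{n\rightarrow\infty}|\bar{\xi}_n-\mu|\le(\alpha-1)\mathbb{E}[|\xi_1|]+\frac{\alpha-1}{\alpha}|\mu|, \ P\textmd{-a.s.},
\]
and letting $\alpha\downarrow1$ along a countable sequence finishes the proof, for every $\beta>0$ and every $\mu\in[\underline{\mu},\overline{\mu}]$ at once.
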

\begin {proof} 
For $\mu\in[\underline{\mu}, \overline{\mu}]$, note that
\[\bar{\xi}_n=\frac{k_n}{n}(\frac{1}{k_n}\Sigma_{i\in N^+_{\kappa}(n)}\xi_i)+\frac{l_n}{n}(\frac{1}{l_n}\Sigma_{i\in N^-_{\kappa}(n)}\xi_i).\]
Set $N^+_{\kappa}=\bigcup_nN^+_{\kappa}(n)$ and $N^-_{\kappa}=\bigcup_nN^-_{\kappa}(n)$. 

Let $e_n=[\alpha^n]$ for some $\alpha>1$. By Lemma \ref {lemma-LLN-esti}, we have
\begin{eqnarray*}
& & \sum_{n=1}^{\infty} P\left\{\left|\bar{\xi}_{e_n}-\mu_n\right|>\varepsilon\right\} \\
&\le& \frac{4}{\varepsilon} \sum_{n=1}^{\infty} \frac{k_{e_n}}{e_n}\mathbb{E}[(\frac{\Sigma_{i\in N^+_{\kappa}(e_n)}\xi_i}{k_{e_n}}-\overline{\mu})^+]+\frac{4}{\varepsilon} \sum_{n=1}^{\infty} \frac{l_{e_n}}{e_n}\mathbb{E}[(\frac{\Sigma_{i\in N^-_{\kappa}(e_n)}\xi_i}{l_{e_n}}-\underline{\mu})^-].
\end{eqnarray*} 
It follows from Theorem 5.1 in \cite {Song19} that
\begin{eqnarray*}
& &\frac{4}{\varepsilon} \sum_{n=1}^{\infty} \frac{k_{e_n}}{e_n}\mathbb{E}[(\frac{\Sigma_{i\in N^+_{\kappa}(e_n)}\xi_i}{k_{e_n}}-\overline{\mu})^+]\\
&\le& \frac{c\mathbb{E}[|\xi_1|^{1+\beta}]}{\varepsilon}\sum_{n=1}^{\infty}\frac{k_{e_n}}{e_n}{k_{e_n}}^{-\beta/(1+\beta)}\le\frac{c\mathbb{E}[|\xi_1|^{1+\beta}]}{\varepsilon}\sum_{n=1}^{\infty}{e_n}^{-\beta/(1+\beta)}<\infty.
\end{eqnarray*}
Similarly, we get 
\[\frac{4}{\varepsilon} \sum_{n=1}^{\infty} \frac{l_{e_n}}{e_n}\mathbb{E}[(\frac{\Sigma_{i\in N^-_{\kappa}(e_n)}\xi_i}{l_{e_n}}-\underline{\mu})^-]<\infty,\] and consequently, 
\[\sum_{n=1}^{\infty} P\left\{\left|\bar{\xi}_{e_n}-\mu_n\right|>\varepsilon\right\}<\infty,\]
which implies that  \[\lim_{n\rightarrow\infty}\bar{\xi}_{e_n}=\mu, \ P\emph{-a.s.}\]
For any $n\in\mathbb{N}$ large enough, there exists $s_n\in\mathbb{N}$ such that $e_{s_n}\le n< e_{s_n+1}$, and
\begin {eqnarray*}
\bar{\xi}_n=\frac{n-e_{s_n}}{n}\bar{\xi}_{n-e_{s_n}}+\frac{e_{s_n}}{n}(\bar{\xi}^{(n-e_{s_n})}_{e_{s_n}}-\mu)+\frac{e_{s_n}-n}{n}\mu+\mu.
\end {eqnarray*}
By Theorem \ref {LLN-bound}, we have $\xi^*_{\infty}=\sup_{n}|\bar{\xi}_n|<\infty$, $P$-a.s.
Since $\lim\limits_{n\rightarrow\infty}\frac{e_{s_n}}{n}(\bar{\xi}^{(n-e_{s_n})}_{e_{s_n}}-\mu)=0$, $P$-a.s. and  $\limsup\limits_{n\rightarrow\infty}\big(|\frac{n-e_{s_n}}{n}\bar{\xi}_{n-e_{s_n}}|+|\frac{e_{s_n}-n}{n}\mu|\big)\le \frac{\alpha-1}{\alpha}(\xi^*_{\infty}+|\mu|)$,  we have \[\limsup_{n\rightarrow\infty}|\bar{\xi}_n-\mu|\le \frac{\alpha-1}{\alpha}(\xi^*_{\infty}+|\mu|), \ P\emph{-a.s.}\]
Letting $\alpha\downarrow 1$, we get
\[\lim_{n\rightarrow\infty}\bar{\xi}_n= \mu, \ P\emph{-a.s.}\]
\end {proof}

In the sequel of this subsection, we consider sublinear expectations on a Polish space $\Omega$, and show that each $\mu\in [\underline{\mu},\overline{\mu}]$ is a cluster point of the empirical averages supposing that the random variables $\{\xi_n\}$ are quasi-continuous and $\Theta$ is weakly compact. 

Let $\Omega$ be a Polish space. Denote by $C_b(\Omega)$ the collection of bounded and continuous functions and $\mathcal {M}_1(\Omega)$ the collection of Borel probability measures on $\Omega$. For $\Theta\subset \mathcal {M}_1(\Omega)$ which is weakly compact, the associated sublinear expectation is defined by
\[\mathbb{E}[\xi]=\sup_{P\in\Theta}E_P[\xi], \ \xi \in C_b(\Omega).\]

\begin {theorem} \label {LLN-P-kappa} Let $\{\xi_k\}\subset L^{1+\beta}_{\mathbb{E}}(\Omega)$, $\beta>0$, be a sequence of independent and identically distributed random variables under $\mathbb{E}$. Set $\underline {\mu}=-\mathbb{E}[-\xi_1]$, $\overline {\mu}=\mathbb{E}[\xi_1]$ and $\bar{\xi}_n=\frac{\xi_1+\cdots+\xi_n}{n}$. Then, for any $\mu\in[\underline {\mu}, \overline {\mu}]$, there exists $P_\mu\in\Theta$ such that \[E_{P_{\mu}}[\xi_n]=\kappa_n(\mu).\] Particularly, 
\[\lim_n\bar{\xi}_n=\mu, \ P_{\mu}\textmd{-a.s.}\]
\end {theorem}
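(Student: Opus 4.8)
The plan is to split the statement into two independent pieces: (i) the existence of a probability $P_\mu\in\Theta$ realizing the prescribed means $E_{P_\mu}[\xi_n]=\kappa_n(\mu)$ for every $n$, and (ii) the almost sure convergence $\lim_n\bar{\xi}_n=\mu$. Piece (ii) is free once (i) is in hand: the hypothesis $\{\xi_k\}\subset L^{1+\beta}_{\mathbb{E}}(\Omega)$ forces $\mathbb{E}[|\xi_1|^{1+\beta}]<\infty$, so the assumptions of Lemma \ref{lemma-LLN-inP} hold, and Lemma \ref{lemma-LLN-as} applied to any $P$ with $E_P[\xi_n]=\kappa_n(\mu)$ delivers $\bar{\xi}_n\to\mu$, $P$-a.s. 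Thus all the work is in constructing $P_\mu$.

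For the construction I would first match finitely many means. Fix $N$ and consider the moment set $K_N=\{(E_P[\xi_1],\ldots,E_P[\xi_N]):P\in\Theta\}\subset\mathbb{R}^N$. Taking $\Theta$ to be convex (replacing it if necessary by its closed convex hull, which represents the same $\mathbb{E}$ and remains weakly compact by tightness), $K_N$ is compact and convex: the coordinate maps $P\mapsto E_P[\xi_n]$ are weakly continuous on $\Theta$ because $\xi_n$ is quasi-continuous and the family $\{\xi_n\}$ is uniformly integrable over $\Theta$, the latter coming from the uniform bound $\sup_{P\in\Theta}E_P[|\xi_n|^{1+\beta}]=\mathbb{E}[|\xi_1|^{1+\beta}]<\infty$. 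Its support function is $h_{K_N}(a)=\sup_{P\in\Theta}E_P[\sum_{n=1}^N a_n\xi_n]=\mathbb{E}[\sum_{n=1}^N a_n\xi_n]$ for $a=(a_1,\ldots,a_N)\in\mathbb{R}^N$.

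The key computation is to evaluate this support function using independence. Peeling coordinates off the top one at a time, using that $\xi_k$ is independent of $(\xi_1,\ldots,\xi_{k-1})$ together with $\mathbb{E}[\,\cdot+c\,]=\mathbb{E}[\,\cdot\,]+c$ for constants $c$, gives $\mathbb{E}[\sum_{n=1}^N a_n\xi_n]=\sum_{n=1}^N\mathbb{E}[a_n\xi_n]=\sum_{n=1}^N\max(a_n\overline{\mu},a_n\underline{\mu})$, where I have used $\xi_n\overset{d}{=}\xi_1$. Since $\kappa_n(\mu)\in\{\underline{\mu},\overline{\mu}\}$, one has $a_n\kappa_n(\mu)\le\max(a_n\overline{\mu},a_n\underline{\mu})$ termwise, hence $\langle a,(\kappa_1(\mu),\ldots,\kappa_N(\mu))\rangle\le h_{K_N}(a)$ for every $a$. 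By the support-function characterization of membership in a compact convex set, $(\kappa_1(\mu),\ldots,\kappa_N(\mu))\in K_N$; equivalently, the set $A_N:=\{P\in\Theta:E_P[\xi_n]=\kappa_n(\mu),\ n\le N\}$ is nonempty. The sets $A_N$ are weakly closed (continuity of the moment maps again), nonempty, and nested decreasing inside the weakly compact $\Theta$, so the finite intersection property yields $\bigcap_N A_N\neq\emptyset$; any $P_\mu$ in this intersection satisfies $E_{P_\mu}[\xi_n]=\kappa_n(\mu)$ for all $n$, completing piece (i).

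I expect the main obstacle to be the two points where unboundedness of the $\xi_n$ intrudes: justifying weak continuity of $P\mapsto E_P[\xi_n]$ (so that $K_N$ is compact and the $A_N$ are closed) and the identity $\mathbb{E}[\eta]=\sup_{P\in\Theta}E_P[\eta]$ for the unbounded functions $\eta=\sum a_n\xi_n\in L^{1+\beta}_{\mathbb{E}}(\Omega)$, and, most delicately, extending the independence peeling from its definition on $C_{b,Lip}$ test functions to the Lipschitz-but-unbounded linear functional $\sum a_n\xi_n$. All three are handled by truncation combined with the uniform $(1+\beta)$-moment bound, so the uniform-integrability bookkeeping is the technical heart of the argument; the convexity of $\Theta$ needed for the support-function step is harmless since we may work with the closed convex hull.
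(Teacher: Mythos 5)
Your overall reduction is exactly the paper's: construct $P_\mu$ with $E_{P_\mu}[\xi_n]=\kappa_n(\mu)$ for all $n$, then get the a.s.\ convergence from Lemma \ref{lemma-LLN-as} (whose hypotheses hold since $\mathbb{E}[|\xi_1|^{1+\beta}]<\infty$), and the technical points you flag (uniform integrability from the $(1+\beta)$-moment bound, weak continuity of $P\mapsto E_P[\xi_n]$, extending independence to unbounded linear combinations by truncation) are precisely what the paper also needs and leaves implicit. The genuine gap is the step you call harmless: replacing $\Theta$ by $\overline{\textmd{co}}(\Theta)$. The theorem asserts $P_\mu\in\Theta$ for the \emph{given} weakly compact representing set, which is not assumed convex, and the support-function characterization of membership only places $(\kappa_1(\mu),\ldots,\kappa_N(\mu))$ in the closed convex hull of the moment set $K_N$; accordingly your $P_\mu$ lands in $\overline{\textmd{co}}(\Theta)$, possibly outside $\Theta$. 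This is strictly weaker. Example \ref{YY} shows the loss is real: there $\Theta$ is the non-convex set of Dirac measures on $\{0,1\}^{\mathbb{N}}$, its closed convex hull is all of $\mathcal{M}_1(\Omega)$, and the whole content of the theorem for that example is that $P_\mu$ can be taken to be a Dirac mass. Membership in $\Theta$ itself (and in its extreme points, and in arbitrary compact representing subsets) is also what Theorem \ref{Thm-LLN-Fd} and Corollary \ref{cor-LLN-Fd} later rely on; note too that Lemma \ref{lemma-LLN-as} is stated for $P\in\Theta$, so even your piece (ii) needs the remark that the enlarged set still represents $\mathbb{E}$.

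The fix needs no convexity and is in fact the paper's argument: test your support function only in the single direction $\sigma=(\sigma_\mu(1),\ldots,\sigma_\mu(N))$, the sign vector of $\kappa(\mu)$ defined in (\ref{sigma(mu)}). By weak compactness and the continuity you already argued, some $P_N\in\Theta$ attains $\sup_{P\in\Theta}E_P\big[\sum_{k\le N}\sigma_\mu(k)\xi_k\big]=\mathbb{E}\big[\sum_{k\le N}\sigma_\mu(k)\xi_k\big]$. Your peeling identity gives $\mathbb{E}\big[\sum_{k\le N}\sigma_\mu(k)\xi_k\big]=\sum_{k\le N}\mathbb{E}[\sigma_\mu(k)\xi_k]$, while $E_{P_N}[\sigma_\mu(k)\xi_k]\le\mathbb{E}[\sigma_\mu(k)\xi_k]$ for each $k$; equality of the sums therefore forces equality term by term, i.e.\ $E_{P_N}[\xi_k]=\kappa_k(\mu)$ for all $k\le N$, with $P_N\in\Theta$ itself. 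Your nested-sets/finite-intersection argument (or the paper's weak-limit extraction along a subsequence) then finishes inside $\Theta$. Alternatively, your convex-hull version can be salvaged by Milman's converse to Krein--Milman: $(\kappa_1(\mu),\ldots,\kappa_N(\mu))$ is a vertex of the box $[\underline{\mu},\overline{\mu}]^N\supset K_N$, hence an extreme point of $\overline{\textmd{co}}(K_N)$, hence lies in the compact set $K_N$ itself; but the one-direction argument is shorter and is what the paper does.
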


\begin {proof} 
Set
\begin {eqnarray}\label {sigma(mu)}
\begin {split}
\sigma_{\mu}(n)=\left\{\begin{array}{ll}{1} & { \ \ \text { if }  \kappa_n(\mu)=\overline{\mu},} \\ {-1} & { \ \ \text { if } \kappa_n(\mu)=\underline{\mu}.}\end{array}\right. \end {split}
\end {eqnarray}
For $n\in\mathbb{N}$, choose $P_n\in\Theta$ such that
\[E_{P_n}[\Sigma_{k=1}^n\sigma(k)\xi_k]=\mathbb{E}[\Sigma_{k=1}^n\sigma(k)\xi_k].\]
Then, for $k\le n$,  we have
\[E_{P_n}\left[\xi_{k}\right]=\left\{\begin{array}{ll}{\overline{\mu}} & { \ \ \text { if } \sigma(k)=1,} \\ {\underline{\mu}} & { \ \ \text { if } \sigma(k)=-1.}\end{array}\right.\]
Since $\Theta$ is weakly compact, we can find a subsequent $\{P_{n_k}\}$ of $\{P_{n}\}$ and $P_{\mu}\in\Theta$ such that $P_{n_k}$ converges to $P_{\mu}$ weakly. Then, for any $n\in \mathbb{N}$,
\[E_{P_\mu}\left[\xi_{n}\right]=\lim_{k\rightarrow\infty}E_{P_{n_k}}\left[\xi_{n}\right]=\left\{\begin{array}{ll}{\overline{\mu}} & { \ \ \text { if } \sigma(n)=1,} \\ {\underline{\mu}} & { \ \ \text { if } \sigma(n)=-1.}\end{array}\right.\]
In other words, $E_{P_{\mu}}[\xi_n]=\kappa_n(\mu)$. The desired result follows from Lemma \ref {lemma-LLN-as}. 
\end {proof}

We have the following straightforward corollary.
\begin {corollary}Let $\{\xi_k\}\subset L^{1+\beta}_{\mathbb{E}}(\Omega)$ for some $\beta>0$ be a sequence of independent and identically distributed random variables under $\mathbb{E}$. Set $\underline {\mu}=-\mathbb{E}[-\xi_1]$, $\overline {\mu}=\mathbb{E}[\xi_1]$. Then, both $\xi_{\infty}^{\triangledown}:=\limsup\limits_{n\rightarrow\infty}\frac{\xi_1+\cdots+\xi_n}{n}$ and $\xi_{\infty}^{\vartriangle}:=\liminf\limits_{n\rightarrow\infty}\frac{\xi_1+\cdots+\xi_n}{n}$ are maximally distributed with means $\underline {\mu}$ and $\overline {\mu}$. Precisely, for $\phi\in C_{b,Lip}(\mathbb{R})$,
\[\mathbb{E}[\phi(\xi_{\infty}^{\triangledown})]=\mathbb{E}[\phi(\xi_{\infty}^{\vartriangle})]=\max_{\mu\in[\underline {\mu}, \overline {\mu}]}\phi(\mu).\]

\end {corollary}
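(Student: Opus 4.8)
The plan is to prove directly that
$\mathbb{E}[\phi(\xi_{\infty}^{\triangledown})]=\mathbb{E}[\phi(\xi_{\infty}^{\vartriangle})]=\max_{\mu\in[\underline{\mu},\overline{\mu}]}\phi(\mu)$
for every $\phi\in C_{b,Lip}(\mathbb{R})$; the assertion about the means $\underline{\mu},\overline{\mu}$ then follows at the end. Since $\phi$ is bounded and $\xi_{\infty}^{\triangledown},\xi_{\infty}^{\vartriangle}$ are Borel measurable (being a $\limsup$/$\liminf$ of continuous, hence measurable, functions of the $\xi_k$), the compositions $\phi(\xi_{\infty}^{\triangledown}),\phi(\xi_{\infty}^{\vartriangle})$ are bounded Borel functions, so that $\mathbb{E}[\cdot]=\sup_{P\in\Theta}E_P[\cdot]$ is well defined on them. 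I would then establish two matching inequalities: an upper bound coming from Theorem \ref{LLN-bound} and a lower bound coming from Theorem \ref{LLN-P-kappa}.

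For the upper bound, fix $P\in\Theta$. Because $\{\xi_k\}\subset L^{1+\beta}_{\mathbb{E}}(\Omega)$ gives $\mathbb{E}[|\xi_1|^{1+\beta}]<\infty$, Theorem \ref{LLN-bound} applies and yields $\underline{\mu}\le\xi_{\infty}^{\vartriangle}\le\xi_{\infty}^{\triangledown}\le\overline{\mu}$, $P$-a.s. Consequently $\phi(\xi_{\infty}^{\triangledown})\le\max_{\mu\in[\underline{\mu},\overline{\mu}]}\phi(\mu)$ and $\phi(\xi_{\infty}^{\vartriangle})\le\max_{\mu\in[\underline{\mu},\overline{\mu}]}\phi(\mu)$, $P$-a.s.; taking $E_P[\cdot]$ and then the supremum over $P\in\Theta$ gives $\mathbb{E}[\phi(\xi_{\infty}^{\triangledown})]\le\max_{\mu}\phi(\mu)$, and likewise for $\xi_{\infty}^{\vartriangle}$.

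For the lower bound, use the compactness of $[\underline{\mu},\overline{\mu}]$ and the continuity of $\phi$ to pick $\mu^*\in[\underline{\mu},\overline{\mu}]$ with $\phi(\mu^*)=\max_{\mu\in[\underline{\mu},\overline{\mu}]}\phi(\mu)$. Theorem \ref{LLN-P-kappa} then supplies $P_{\mu^*}\in\Theta$ with $\bar{\xi}_n\to\mu^*$, $P_{\mu^*}$-a.s., whence $\xi_{\infty}^{\triangledown}=\xi_{\infty}^{\vartriangle}=\mu^*$, $P_{\mu^*}$-a.s. Therefore $E_{P_{\mu^*}}[\phi(\xi_{\infty}^{\triangledown})]=\phi(\mu^*)=\max_{\mu}\phi(\mu)$, and taking the supremum over $\Theta$ gives $\mathbb{E}[\phi(\xi_{\infty}^{\triangledown})]\ge\max_{\mu}\phi(\mu)$, and the same for $\xi_{\infty}^{\vartriangle}$. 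Combining the two bounds proves the claimed identity. Finally, applying it to a $\phi\in C_{b,Lip}(\mathbb{R})$ that coincides with $x\mapsto x$ (resp.\ $x\mapsto -x$) on $[\underline{\mu},\overline{\mu}]$, and using that $\xi_{\infty}^{\triangledown},\xi_{\infty}^{\vartriangle}\in[\underline{\mu},\overline{\mu}]$ a.s.\ under every $P$, identifies the means as $\mathbb{E}[\xi_{\infty}^{\triangledown}]=\overline{\mu}$ and $-\mathbb{E}[-\xi_{\infty}^{\triangledown}]=\underline{\mu}$ (similarly for $\xi_{\infty}^{\vartriangle}$).

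There is no serious obstacle here: all the analytic work has already been carried out in Theorems \ref{LLN-bound} and \ref{LLN-P-kappa}, and the corollary is their clean combination—the upper bound confines every cluster point to $[\underline{\mu},\overline{\mu}]$, while the lower bound exhibits, for the maximizing $\mu^*$, a single measure in $\Theta$ under which the empirical average converges to $\mu^*$ almost surely. The only points requiring a little care are the well-definedness of $\mathbb{E}$ on the bounded Borel functions $\phi(\xi_{\infty}^{\triangledown}),\phi(\xi_{\infty}^{\vartriangle})$ and the truncation argument used to read off the means, both of which are routine.
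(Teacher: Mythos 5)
Your proposal is correct and is exactly the argument the paper intends: the paper states this as a ``straightforward corollary'' of the preceding results, with the upper bound supplied by Theorem \ref{LLN-bound} (every cluster point lies in $[\underline{\mu},\overline{\mu}]$ under each $P\in\Theta$) and the lower bound by Theorem \ref{LLN-P-kappa} (a measure $P_{\mu^*}\in\Theta$ under which $\bar{\xi}_n\to\mu^*$ a.s.\ for the maximizer $\mu^*$ of $\phi$). Your additional care about Borel measurability of $\phi(\xi_{\infty}^{\triangledown})$, $\phi(\xi_{\infty}^{\vartriangle})$ and the reading-off of the means is consistent with the paper's conventions and introduces no gap.
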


\begin {lemma} \label {BorelMap} Under the same conditions as those in Theorem \ref {LLN-P-kappa}, there exists a Borel mapping
$P_{\mu}:\ [\underline{\mu},\overline{\mu}]\rightarrow\Theta$ such that $E_{P_{\mu}}[\xi_n]=\kappa_n(\mu).$
\end {lemma}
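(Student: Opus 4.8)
The plan is to realize the assignment $\mu\mapsto P_\mu$ as a measurable selection from the set-valued map
\[
\Theta_\mu := \{\, P\in\Theta : E_P[\xi_n]=\kappa_n(\mu)\ \text{for all}\ n\in\mathbb{N} \,\},
\]
which Theorem \ref{LLN-P-kappa} already guarantees to be nonempty for every $\mu\in[\underline{\mu},\overline{\mu}]$. The three ingredients I would establish are: that $\mu\mapsto\kappa_n(\mu)$ is Borel, that $P\mapsto E_P[\xi_n]$ is weakly continuous on $\Theta$, and that together these make the graph of $\Theta_\mu$ a Borel set with compact sections, to which a standard selection theorem applies.

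First I would check that each $\kappa_n(\cdot)$ is Borel in $\mu$, by induction on the recursion (\ref{kappa}): $\kappa_1$ is a two-valued step function of $\mu$, and if $\kappa_1,\dots,\kappa_n$ are Borel then so is $\mu_n=(\kappa_1+\cdots+\kappa_n)/n$, whence $\kappa_{n+1}=\overline{\mu}\,\mathbf{1}_{\{\mu\ge\mu_n\}}+\underline{\mu}\,\mathbf{1}_{\{\mu<\mu_n\}}$ is Borel as well (indeed each $\kappa_n$ is piecewise constant with finitely many jumps).

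Next I would verify the weak continuity of $P\mapsto E_P[\xi_n]$ on $\Theta$, which is the fact used silently in the proof of Theorem \ref{LLN-P-kappa}. Since $\xi_n\in L^{1+\beta}_{\mathbb{E}}(\Omega)\subset L^1_{\mathbb{E}}(\Omega)$, there are $\eta_m\in C_b(\Omega)$ with $\mathbb{E}[|\xi_n-\eta_m|]\to0$, so that $\sup_{P\in\Theta}|E_P[\xi_n]-E_P[\eta_m]|\le\mathbb{E}[|\xi_n-\eta_m|]\to0$. Thus $P\mapsto E_P[\xi_n]$ is a uniform limit on $\Theta$ of the weakly continuous maps $P\mapsto E_P[\eta_m]$, hence weakly continuous. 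Consequently each set $\{P\in\Theta:E_P[\xi_n]=\kappa_n(\mu)\}$ is closed, and $\Theta_\mu$, being a countable intersection of such sets, is closed; since $\Omega$ is Polish, $\Theta$ is compact metrizable, so $\Theta_\mu$ is in fact compact.

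Finally I would assemble these into a Borel selection. The graph $\mathrm{Gr}=\{(\mu,P):P\in\Theta_\mu\}=\bigcap_n\{(\mu,P):E_P[\xi_n]=\kappa_n(\mu)\}$ is Borel in $[\underline{\mu},\overline{\mu}]\times\Theta$, because $(\mu,P)\mapsto E_P[\xi_n]-\kappa_n(\mu)$ is a Carath\'eodory function (Borel in $\mu$, continuous in $P$) on a product with $\Theta$ separable metric, hence jointly Borel, so each member of the intersection is Borel. As $\Theta$ is compact, the sections $\Theta_\mu$ are compact, and a Borel subset of a product whose sections are compact (hence $\sigma$-compact) admits a Borel uniformization; applying this (equivalently, the Kuratowski--Ryll-Nardzewski selection theorem, whose measurability hypothesis is met precisely because $\mathrm{Gr}$ is Borel with compact sections) produces a Borel map $\mu\mapsto P_\mu\in\Theta$ with $(\mu,P_\mu)\in\mathrm{Gr}$, i.e. $E_{P_\mu}[\xi_n]=\kappa_n(\mu)$ for all $n$. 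The main obstacle is the measurability/continuity bookkeeping, namely confirming the weak continuity of $P\mapsto E_P[\xi_n]$ for the possibly unbounded $\xi_n$ and the Borel character of the graph; once these are in place, the selection is a routine invocation of a standard uniformization theorem.
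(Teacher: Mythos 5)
Your proof is correct, and while it follows the same high-level strategy as the paper --- a measurable selection from the multifunction $\mu \mapsto \Theta_\mu=\{P \in \Theta : E_P[\xi_n] = \kappa_n(\mu),\ n \in \mathbb{N}\}$, whose nonemptiness both you and the paper take from Theorem \ref{LLN-P-kappa} --- the key selection step is carried out by a genuinely different route. The paper first factors the parameter map through the sequence space: it notes that $\mu \mapsto \kappa(\mu)$ is Borel from $[\underline{\mu},\overline{\mu}]$ into $\{\underline{\mu},\overline{\mu}\}^{\mathbb{N}}$, and that the multifunction $\kappa \mapsto K_\kappa = \{P \in \Theta : E_P[\xi_n] = \kappa_n \text{ for all } n\}$ has closed graph \emph{on the sequence space}, because there each $\kappa_n$ is a coordinate (hence continuous) function rather than the discontinuous step function $\kappa_n(\mu)$; with $\Theta$ weakly compact this makes $\kappa\mapsto K_\kappa$ an upper semicontinuous compact-valued map, to which the comparatively soft selection theorem of Stroock and Varadhan (Lemma 12.1.8 and Theorem 12.1.10 of \cite{SV97}) applies, and composing that Borel selection with the Borel map $\mu\mapsto\kappa(\mu)$ finishes. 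You skip this factorization and work directly over $[\underline{\mu},\overline{\mu}]$, where the graph of $\mu\mapsto\Theta_\mu$ is only Borel, not closed (precisely because the $\kappa_n(\cdot)$ are discontinuous), so you must invoke a strictly stronger descriptive-set-theoretic tool: the Arsenin--Kunugui uniformization theorem for Borel sets with $\sigma$-compact sections, or equivalently Novikov's projection theorem to supply the weak measurability needed for Kuratowski--Ryll-Nardzewski. That is legitimate, so there is no gap; the trade-off is that the paper's detour through $\{\underline{\mu},\overline{\mu}\}^{\mathbb{N}}$ converts a Borel-graph problem into a closed-graph one and thereby avoids the heavy uniformization machinery, while your route is more direct and makes explicit two facts the paper uses silently: the Borel measurability of each $\kappa_n(\cdot)$ and the weak continuity of $P \mapsto E_P[\xi_n]$ on $\Theta$, which you correctly derive from uniform approximation of $\xi_n\in L^{1+\beta}_{\mathbb{E}}(\Omega)$ by $C_b(\Omega)$ functions in the $\|\cdot\|_1$ norm.
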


\begin {proof}  Firstly, it is easy to see that the mapping $\kappa(\mu): [\underline{\mu},\overline{\mu}]\rightarrow \{\underline{\mu}, \overline{\mu}\}^{\mathbb{N}}$ defined in (\ref {kappa}) is Borel measurable. Here, the distance on $\{\underline{\mu}, \overline{\mu}\}^{\mathbb{N}}$ is given by $d(\kappa, \kappa')=\Sigma_{n=0}^\infty \frac{1}{2^n}|\kappa(n)-\kappa'(n)|$. For $\kappa\in \{\underline{\mu}, \overline{\mu}\}^{\mathbb{N}}$, set 
\begin {eqnarray}\label {K(sigma)}
K_{\kappa}=\{P\in\Theta \ | \ E_{P}[\xi_n]=\kappa_n, n\in\mathbb{N}.\}
\end {eqnarray}  So it suffices to find a Borel mapping $P_{\kappa}: \{-1,1\}^{\mathbb{N}}\rightarrow\Theta$ such that $P_{\kappa}\in K_{\kappa}$. 

 Assume that $d(\kappa^m, \kappa)\rightarrow0$, $P_{\kappa^m}\in K_{\kappa^m}$, and that $P_{\kappa^{m_k}}$ converges weakly to $P$. Then
\[E_P[\xi_n]=\lim_{k}E_{P_{\kappa^{m_k}}}[\xi_n]=\lim_{k}\kappa^{m_k}_n=\kappa_n,\] which says that $P\in K_{\kappa}$. Then, by Lemma 12.1.8 and Theorem 12.1.10 in Stroock and Varadhan (1997) (\cite {SV97}), we can find a Borel measurable mapping $P_{\kappa}: \{\underline{\mu}, \overline{\mu}\}^{\mathbb{N}}\rightarrow\Theta$ such that $P_{\kappa}\in K_{\kappa}$. 
\end {proof}

\vskip 0.1 cm

A key step in the proof to Lemma \ref{lemma-LLN-as} is to show that, for any $\varepsilon>0$ and $\gamma>1$,
\begin {eqnarray}\sum_{n=1}^{\infty} \label {Borel-Cantelli} P_{\overline{\mu}}\left\{\left|\bar{\xi}_{e_n}-\overline{\mu}\right|>\varepsilon\right\}\le \frac{c\mathbb{E}[|\xi_1|^{\gamma}]}{\varepsilon} \sum_{n=1}^{\infty} \frac{1}{e_{n}^{\frac{\gamma-1}{\gamma}}} ,
\end {eqnarray} for $P_{\overline{\mu}}\in\Theta$ such that $E_{P_{\overline{\mu}}}[\xi_n]=\overline{\mu}$, which is based on the error estimates for the wLLN* given in \cite {Song19}. For $\alpha=2$, (\ref {Borel-Cantelli}) can also be proved by Lemma \ref {irrelevant} below.
In fact, Chebyshev's inequality gives
\begin{equation*}
\begin{aligned} \sum_{n=1}^{\infty} P_{\overline{\mu}}\left\{\left|\bar{\xi}_{e_n}-\overline{\mu}\right|>\varepsilon\right\} & \le \frac{1}{\varepsilon^2} \sum_{n=1}^{\infty} E_{P_{\overline{\mu}}}[(\bar{\xi}_{e_n}-\overline{\mu})^2].
\end{aligned}
\end {equation*}
It follows from Lemma \ref {irrelevant} that
\begin {eqnarray*}
E_{P_{\overline{\mu}}}[(\bar{\xi}_{e_n}-\overline{\mu})^2]&=&\Sigma_{k=1}^{e_n}\frac{1}{e_n^2}E_{P_{\overline{\mu}}}[(\xi_k-\overline{\mu})^2]+\Sigma_{i<j}\frac{2}{e_n^2}E_{P_{\overline{\mu}}}[(\xi_i-\overline{\mu})(\xi_j-\overline{\mu})]\\
&\le&\Sigma_{k=1}^{e_n}\frac{1}{e_n^2}E_{P_{\overline{\mu}}}[\xi_k^2]\le \frac{\mathbb{E}[|\xi_1|^{2}]}{e_n}.
\end {eqnarray*}
Therefore, we have
\begin{equation*}
\begin{aligned} \sum_{n=1}^{\infty} P_{\overline{\mu}}\left\{\left|\bar{\xi}_{e_n}-\overline{\mu}\right|>\varepsilon\right\} \le \frac{\mathbb{E}[|\xi_1|^{2}]}{\varepsilon^2} \sum_{n=1}^{\infty} \frac{1}{e_{n}} <\infty.
\end{aligned}
\end {equation*}
\begin {lemma} \label {irrelevant}Suppose that $\eta$ is independent to $\xi$ under $\mathbb{E}$ for $\xi, \eta\in L^2_{\mathbb{E}}(\Omega)$. Then, for $P\in\Theta$ with $E_P[\xi+\eta]=\mathbb{E}[\xi+\eta]$, we have
\[E_P[\xi\eta]\le E_P[\xi]E_P[\eta].\]
\end {lemma}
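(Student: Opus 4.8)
The plan is to exploit the extremality of $P$ through a first-order perturbation of $\xi+\eta$ in the direction of the product $\xi\eta$. As a preliminary reduction, write $\overline{\mu}_\xi=\mathbb{E}[\xi]$ and $\overline{\mu}_\eta=\mathbb{E}[\eta]$. The independence of $\eta$ from $\xi$ gives $\mathbb{E}[\xi+\eta]=\mathbb{E}\big[\xi+\mathbb{E}[\eta]\big]=\overline{\mu}_\xi+\overline{\mu}_\eta$, since integrating $\eta$ out first uses $\mathbb{E}[x+\eta]=x+\overline{\mu}_\eta$ for each constant $x$. Together with the hypothesis $E_P[\xi+\eta]=\mathbb{E}[\xi+\eta]$, the linearity $E_P[\xi+\eta]=E_P[\xi]+E_P[\eta]$, and the one-sided bounds $E_P[\xi]\le\overline{\mu}_\xi$, $E_P[\eta]\le\overline{\mu}_\eta$, this forces $E_P[\xi]=\overline{\mu}_\xi$ and $E_P[\eta]=\overline{\mu}_\eta$. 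Hence $E_P[\xi]E_P[\eta]=\overline{\mu}_\xi\overline{\mu}_\eta$, and it remains only to prove $E_P[\xi\eta]\le\overline{\mu}_\xi\overline{\mu}_\eta$.

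For the main step I would fix small $t>0$, set $Z_t=\xi+\eta+t\,\xi\eta$, and compare the two sides of $E_P[Z_t]\le\mathbb{E}[Z_t]$. On the left, linearity of $E_P$ and the reduction give $E_P[Z_t]=\overline{\mu}_\xi+\overline{\mu}_\eta+t\,E_P[\xi\eta]$. On the right I would integrate $\eta$ out first via independence: with $\underline{\mu}_\eta=-\mathbb{E}[-\eta]$ one has the exact identity $\mathbb{E}[(1+tx)\eta]=(1+tx)\overline{\mu}_\eta+(\overline{\mu}_\eta-\underline{\mu}_\eta)(1+tx)^-$, so that $\mathbb{E}[Z_t]=\mathbb{E}\big[\xi+(1+t\xi)\overline{\mu}_\eta+(\overline{\mu}_\eta-\underline{\mu}_\eta)(1+t\xi)^-\big]$. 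By subadditivity the first two terms contribute $(1+t\overline{\mu}_\eta)\overline{\mu}_\xi+\overline{\mu}_\eta=\overline{\mu}_\xi+\overline{\mu}_\eta+t\,\overline{\mu}_\xi\overline{\mu}_\eta$ (for $t$ small enough that $1+t\overline{\mu}_\eta>0$), whereas the last term is a pure remainder: since $(1+t\xi)^-\le t|\xi|\mathbf{1}_{\{|\xi|>1/t\}}\le t^2|\xi|^2$ and $\xi\in L^2_{\mathbb{E}}$, we get $\mathbb{E}[(1+t\xi)^-]\le t^2\|\xi\|_2^2$. Thus $\mathbb{E}[Z_t]\le \overline{\mu}_\xi+\overline{\mu}_\eta+t\,\overline{\mu}_\xi\overline{\mu}_\eta+Ct^2$ with $C=(\overline{\mu}_\eta-\underline{\mu}_\eta)\|\xi\|_2^2$. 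Combining the two sides cancels the constant part, leaving $t\,E_P[\xi\eta]\le t\,\overline{\mu}_\xi\overline{\mu}_\eta+Ct^2$; dividing by $t>0$ and letting $t\downarrow 0$ yields $E_P[\xi\eta]\le\overline{\mu}_\xi\overline{\mu}_\eta=E_P[\xi]E_P[\eta]$, as required.

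The step I expect to be the main obstacle is the rigorous use of independence for the quadratic-growth test function $\varphi(x,y)=x+y+txy$ (and already for $x+y$), which lies outside $C_{b,Lip}$. I would justify the identity $\mathbb{E}[\varphi(\xi,\eta)]=\mathbb{E}[\mathbb{E}[\varphi(x,\eta)]|_{x=\xi}]$ by approximating $\varphi$ with $C_{b,Lip}$ functions coinciding with it on large squares $[-k,k]^2$ and controlling the tails in $L^1_{\mathbb{E}}$; here $\xi,\eta\in L^2_{\mathbb{E}}$ guarantees $\xi\eta\in L^1_{\mathbb{E}}$ (by H\"older, $\mathbb{E}[|\xi\eta|]\le\|\xi\|_2\|\eta\|_2$) and makes these tails vanish. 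The very same $L^2$ integrability is what tames the sign region $\{1+t\xi<0\}$ and keeps the remainder of order $t^2$, so that the passage $t\downarrow 0$ is harmless. All the remaining manipulations are routine properties of the linear expectation $E_P$ and the subadditivity of $\mathbb{E}$.
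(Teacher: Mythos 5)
Your proof is correct, and it takes a genuinely different route from the paper's. The paper argues by positivity and truncation: for $\xi\ge 0$ it uses independence to factor $\mathbb{E}[\xi\eta]=\mathbb{E}[\xi]\mathbb{E}[\eta]$, so that $E_Q[\xi\eta]\le\mathbb{E}[\xi\eta]=E_P[\xi]E_P[\eta]$ for \emph{every} $Q\in\Theta$; shifting gives, for $\xi\ge-c$, the inequality $E_Q[\xi\eta]\le E_P[\xi]E_P[\eta]+c\,(E_P[\eta]-E_Q[\eta])$; finally it truncates $\xi_n=\max\{\xi,-n\}$, chooses for each $n$ a maximizer $P_n$ of $E_{\cdot}[\xi_n+\eta]$, and exploits that every such maximizer satisfies $E_{P_n}[\eta]=\mathbb{E}[\eta]=E_P[\eta]$, so the error term $n\,(E_{P_n}[\eta]-E_P[\eta])$ vanishes and $E_P[\xi_n\eta]\le\mathbb{E}[\xi_n]\mathbb{E}[\eta]$ passes to the limit. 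You instead quantify the first-order optimality of $P$ via the perturbation $Z_t=\xi+\eta+t\xi\eta$: your exact formula $\mathbb{E}[a\eta]=a\overline{\mu}_\eta+(\overline{\mu}_\eta-\underline{\mu}_\eta)a^-$ is right, the remainder bound $\mathbb{E}[(1+t\xi)^-]\le t^2\|\xi\|_2^2$ is right, and the cancellation followed by $t\downarrow 0$ is clean. Both proofs rest on the same reduction ($E_P[\xi+\eta]=\mathbb{E}[\xi+\eta]$ forces $E_P[\xi]=\mathbb{E}[\xi]$ and $E_P[\eta]=\mathbb{E}[\eta]$), and both need the independence identity, stated only for $C_{b,Lip}$ test functions, extended to quadratic-growth functions ($\varphi(x,y)=xy$ in the paper, $x+y+txy$ for you); you flag this and your truncation-plus-tail-control sketch (using $\mathbb{E}[|X|^2\mathbf{1}_{\{|X|>k\}}]\to 0$ for $X\in L^2_{\mathbb{E}}(\Omega)$, cf.\ \cite{DHP11}) is the correct justification, whereas the paper uses the same extension silently. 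What your route buys: a single uniform estimate with no case analysis, no truncation of $\xi$, and, importantly, no need for suprema to be \emph{attained} --- the paper's truncation step requires the existence of maximizers $P_n$ (hence weak compactness of $\Theta$ and continuity of $P\mapsto E_P[\xi_n+\eta]$), while you only ever use $E_P[Z_t]\le\mathbb{E}[Z_t]$; moreover your $(1+tx)^-$ correction handles both signs of $\mathbb{E}[\eta]$, whereas the paper's identity $\mathbb{E}[\xi\eta]=\mathbb{E}[\xi]\mathbb{E}[\eta]$ for $\xi\ge 0$ is literally valid only when $\mathbb{E}[\eta]\ge 0$ (harmless in its intended application to centered variables, but your argument is cleaner as a standalone statement). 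What the paper's route buys: it is purely algebraic, avoiding the quantitative $O(t^2)$ remainder and the limiting argument in $t$, and its intermediate inequality for all $Q\in\Theta$ is of independent use.
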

\begin {proof} If $\xi\ge0$, we have, for any $Q\in\Theta$,
\begin {eqnarray}\label {indep-p}
E_Q[\xi\eta]\le\mathbb{E}[\xi\eta]=\mathbb{E}[\xi]\mathbb{E}[\eta]=E_P[\xi]E_P[\eta].
\end {eqnarray}
If $\xi\ge-c$ for some $c\ge0$, it follows from (\ref {indep-p}) that, for any $Q\in\Theta$,
\[E_Q[(\xi+c)\eta]\le E_P[\xi+c]E_P[\eta],\]
or, equivalently,
\begin {eqnarray}\label {indep-bb}
E_Q[\xi\eta]\le E_P[\xi]E_P[\eta]+c(E_P[\eta]-E_Q[\eta]).
\end {eqnarray}
For the general case, set $\xi_n=\max\{\xi,-n\}$. Choose $P_n\in \Theta$ such that $E_{P_n}[\xi_n+\eta]=\mathbb{E}[\xi_n+\eta]$. Note that $E_{P_n}[\eta]=E_P[\eta]=\mathbb{E}[\eta]$. It follows from (\ref {indep-bb}) that
\[E_P[\xi_n\eta]\le E_{P_n}[\xi_n]E_{P_n}[\eta]+n(E_{P_n}[\eta]-E_P[\eta]),\]
and consequently, we have $E_P[\xi_n\eta]\le \mathbb{E}[\xi_n]\mathbb{E}[\eta].$
Letting $n$ go to infinity, we have \[E_P[\xi\eta]\le \mathbb{E}[\xi]\mathbb{E}[\eta]=E_P[\xi]E_P[\eta].\]
\end {proof}
\begin {remark} In the above lemma, if $E_P[\xi+\eta]=-\mathbb{E}[-(\xi+\eta)]$, we also have
\[E_P[\xi\eta]\le E_P[\xi]E_P[\eta],\]
and, if $E_P[\xi-\eta]=\mathbb{E}[\xi-\eta]$ or $E_P[\eta-\xi]=\mathbb{E}[\eta-\xi]$
we also have
\[E_P[\xi\eta]\ge E_P[\xi]E_P[\eta].\]
\end {remark}

\section {On the tail $\sigma$-algebra}
In a probability space, it is well-known by the Kolmogorov 0-1 law that the tail $\sigma$-algebra of a sequence of independent and identically distributed random variables is trivial. In this section, we study the triviality of the tail $\sigma$-algebra of a sequence of i.i.d  random variables under a sublinear expectation.

Let $\Omega=\mathbb{R}^{\mathbb{N}}$ endowed with the metric $d(x,y):=\Sigma_{k=1}^{\infty}\frac{1}{2^n}(|x(k)-y(k)|\wedge 1)$, for $x, y \in \Omega$. This is a complete separable metric space. 

Notations:
\begin{itemize}
  \item For $k\in\mathbb{N}$, set $\xi_k(\omega)=\omega(k)$, $\omega\in\Omega$;
  \item For $n\in\mathbb{N}$, let $\theta_n\{\xi_1, \xi_2, \cdots\}=\{\xi_{n+1}, \xi_{n+2}, \cdots\}$ be the $n$-shift operator;
  \item $\mathcal{G}_n:=\sigma\{\xi_n, \xi_{n+1}, \cdots \}$, $\mathcal{F}_n:=\sigma\{\xi_1,\cdots, \xi_{n}\}$ and $\mathcal {T}:=\cap_n\mathcal{G}_n$;
  \item $\bar{\xi}_n:=\frac{\xi_1+\cdots+\xi_n}{n}$, $\xi_{\infty}^{\triangledown}:=\limsup\limits_{n\rightarrow\infty}\bar{\xi}_n$, $\xi_{\infty}^{\vartriangle}:=\liminf\limits_{n\rightarrow\infty}\bar{\xi}_n$.
  \end{itemize}
 
 For a regular sublinear expectation $\mathbb{E}$ on $(\Omega, C_b(\Omega))$, set
\begin {eqnarray*}
\Theta:=\{P\in \mathcal{M}_1(\Omega)|& & E_{P}[X]\le \hat{\mathbb{E}}[X], \ \textmd{for any} \ X\in C_b(\Omega)\},
\end {eqnarray*} and let $\Theta^e$ be the set of the extreme points of $\Theta$.  By the representation theorem of sublinear expectations, we have,  for $X\in C_b(\Omega)$, $\mathbb{E}[X]=\sup_{P\in\Theta}E_P[X].$  This representation still holds for $X\in L^p_{\mathbb{E}}(\Omega)$,  $p\ge1$, the completion of $C_b(\Omega)$ under the $L^p$-norm $\|\cdot\|_p^p=\mathbb{E}[|\cdot|^p]$. 

\vskip 0.1 cm

\noindent {\textbf{Assumption}} Throughout this section, we assume that $\{\xi_k\}\subset L^{\gamma}_{\mathbb{E}}(\Omega)$, for some $\gamma>1$.

For $d\in\mathbb{N}$ and $P\in\Theta$, set $P_{1,d}=P\circ(\xi_1,\cdots, \xi_d)^{-1}$, and $\Xi_d=\{P_{1,d} \ | \ P\in\Theta\}$, which is convex and closed. Designate $\Xi_d^e$ the set of extreme points of $\Xi_d$.

Supposing $\xi_1, \xi_2, \cdots$ is a sequence of i.i.d random variables under $\mathbb{E}$, we first give a generalization of Theorem \ref {LLN-P-kappa} in the space $(\Omega, L^1_{\mathbb{E}}(\Omega), \mathbb{E})$. Write $\overline {\mu}=\mathbb{E}[\xi_1]$, $\underline {\mu}=-\mathbb{E}[-\xi_1]$.

\begin {theorem} \label {Thm-LLN-Fd} For any $\mathcal{F}_d$-measurable random variable $\Pi=\pi(\xi_1,\cdots,\xi_d)$ with values in $[\underline{\mu}, \overline{\mu}]$ and $d\in\mathbb{N}$, and any $P\in\Theta$, there exists a probability $P^{\Pi}\in\Theta$ such that $P^{\Pi}=P$ on $\mathcal{F}_d$,  and
\begin {eqnarray}\label {LLN-Fd-as}
\lim_{n\rightarrow \infty}\bar{\xi}_n =\Pi, \ P^{\Pi}\emph{-a.s.}
\end {eqnarray}
Furthermore, if $P_{1,d}\in\Xi_d^e$, $P^{\Pi}$ can also be chosen from $\Theta^e$.
\end {theorem}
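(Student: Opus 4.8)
The plan is to reduce the random target $\Pi=\pi(\xi_1,\dots,\xi_d)$ to the constant targets already handled in Theorem~\ref{LLN-P-kappa} by conditioning on $\mathcal F_d$ and pasting. I would fix the Borel selection $\mu\mapsto P_\mu\in\Theta$ of Lemma~\ref{BorelMap}, for which $E_{P_\mu}[\xi_n]=\kappa_n(\mu)$ and hence $\bar\xi_n\to\mu$, $P_\mu$-a.s., by Lemma~\ref{lemma-LLN-as}. Composing with the Borel map $\pi$ produces a Borel kernel $x\mapsto P_{\pi(x)}$ on $\mathbb R^d$, and I define $P^\Pi$ on $\Omega=\mathbb R^{\mathbb N}$ to be the measure whose first $d$ coordinates have the law $P_{1,d}=P\circ(\xi_1,\dots,\xi_d)^{-1}$ and whose tail $(\xi_{d+1},\xi_{d+2},\dots)$, conditionally on $(\xi_1,\dots,\xi_d)=x$, is an independent copy of $(\xi_1,\xi_2,\dots)$ distributed as under $P_{\pi(x)}$. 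This glued measure is well defined from the marginal $P_{1,d}$ and the regular conditional kernel $P_{\pi(x)}$, and by construction $P^\Pi=P$ on $\mathcal F_d$.

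To see $P^\Pi\in\Theta$, I would test against cylinder functions $\varphi=\varphi(\xi_1,\dots,\xi_{d+m})\in C_b(\Omega)$ and set $\psi(x):=\mathbb E[\varphi(x,\xi_1,\dots,\xi_m)]$, so that
\[
E_{P^\Pi}[\varphi]=\int E_{P_{\pi(x)}}[\varphi(x,\cdot)]\,dP_{1,d}(x)\le\int\psi(x)\,dP_{1,d}(x)=E_P[\psi(\xi_1,\dots,\xi_d)]\le\mathbb E[\psi(\xi_1,\dots,\xi_d)]=\mathbb E[\varphi],
\]
where the first inequality uses $P_{\pi(x)}\in\Theta$, the second uses $P\in\Theta$, and the final equality is the independence of $(\xi_{d+1},\dots,\xi_{d+m})$ from $(\xi_1,\dots,\xi_d)$ together with the identical distribution $(\xi_{d+1},\dots,\xi_{d+m})\overset{d}{=}(\xi_1,\dots,\xi_m)$. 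The inequality extends from cylinder functions to all of $C_b(\Omega)$ by approximation, so $P^\Pi\in\Theta$. For the convergence, conditionally on $(\xi_1,\dots,\xi_d)=x$ the shifted averages $\frac1m(\xi_{d+1}+\cdots+\xi_{d+m})$ tend to $\pi(x)$, $P_{\pi(x)}$-a.s., by Lemma~\ref{lemma-LLN-as}; since the first $d$ terms are negligible in $\bar\xi_n$, this gives $\bar\xi_n\to\pi(x)=\Pi$ conditionally, and integrating over $x$ yields $\bar\xi_n\to\Pi$, $P^\Pi$-a.s. This settles the first assertion.

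For the refinement to $\Theta^e$ under $P_{1,d}\in\Xi_d^e$, the a.s.\ convergence requirement is not weakly closed, so I would not optimise over it directly. Instead I would first establish the structural fact that the regular conditional law $Q_x$ given $\mathcal F_d$ of any $Q\in\Theta$ again lies in $\Theta$ for $Q_{1,d}$-a.e.\ $x$. This follows from the maximality of $\Theta$ and the independence identity: for a cylinder $\varphi$ with $c=\mathbb E[\varphi]$ and any $g\ge 0$ in $C_b$ one has $\mathbb E[g(\xi_1,\dots,\xi_d)(\varphi(\xi_{d+1},\dots)-c)]=0$, whence $E_Q[g(\xi_1,\dots,\xi_d)(\varphi-c)]\le 0$ for all such $g$ forces $E_{Q_x}[\varphi]\le c$ a.e.; intersecting over a countable determining family of $\varphi$ gives $Q_x\in\Theta$ a.e. Using this, I replace the a.s.\ convergence condition by the equivalent family of conditional-mean constraints $E_Q[\xi_{d+n}\mid\mathcal F_d]=\kappa_n(\Pi)$, $n\in\mathbb N$, that is, by the linear constraints $E_Q[g(\xi_1,\dots,\xi_d)\,\xi_{d+n}]=E_P[g(\xi_1,\dots,\xi_d)\,\kappa_n(\pi(\xi_1,\dots,\xi_d))]$ over a countable determining family of $g$, and let $\mathcal C$ be the set of $Q\in\Theta$ with $Q=P$ on $\mathcal F_d$ satisfying all of them.

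Finally I would show $\mathcal C$ is a weakly compact face of $\Theta$ and apply Krein--Milman. Convexity is clear; the integrability assumption $\{\xi_k\}\subset L^{\gamma}_{\mathbb E}(\Omega)$, $\gamma>1$, makes $Q\mapsto E_Q[g(\xi_1,\dots,\xi_d)\xi_{d+n}]$ weakly continuous on $\Theta$ (uniform integrability), so $\mathcal C$ is weakly closed, hence compact; it is nonempty because $P^\Pi\in\mathcal C$. It is a face: if $\tfrac12(R_1+R_2)\in\mathcal C$ with $R_i\in\Theta$, extremality of $P_{1,d}$ forces $R_i=P$ on $\mathcal F_d$, and on $\{\kappa_n(\Pi)=\overline\mu\}$ the relation $\overline\mu=\tfrac12\big(E_{R_{1,x}}[\xi_n]+E_{R_{2,x}}[\xi_n]\big)$ with both terms $\le\overline\mu$ (the conditional laws $R_{i,x}$ lying in $\Theta$ by the structural fact) forces $E_{R_{i,x}}[\xi_n]=\overline\mu$, and symmetrically on $\{\kappa_n(\Pi)=\underline\mu\}$, so $R_i\in\mathcal C$. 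Thus any extreme point $P^\Pi_*$ of $\mathcal C$ lies in $\Theta^e$, and every $Q\in\mathcal C$ satisfies $\bar\xi_n\to\Pi$, $Q$-a.s.\ (apply Lemma~\ref{lemma-LLN-as} to the conditional laws $Q_x\in\Theta$, which have means $\kappa_n(\pi(x))$), so $P^\Pi_*$ has all the required properties. The main obstacle is exactly this last step: the natural a.s.\ convergence constraint is not weakly closed, and the remedy---reformulating it through conditional means and proving that $\Theta$ is stable under conditioning on $\mathcal F_d$---is where the i.i.d.\ structure and the maximality of $\Theta$ are essential.
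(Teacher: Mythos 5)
Your proposal is correct and follows essentially the same route as the paper's own proof: the same gluing of $P|_{\mathcal F_d}$ with the Borel kernel $\mu\mapsto P_\mu$ from Lemma \ref{BorelMap}, the same replacement of the (not weakly closed) a.s.\ convergence condition by the conditional-mean constraints $E_Q[\xi_{n+d}\mid\mathcal F_d]=\kappa_n(\Pi)$ defining a nonempty convex weakly compact set (the paper's $\Theta^{\Pi}_P$), and the same Krein--Milman plus face argument combining extremality of $P_{1,d}$ in $\Xi_d$ with the fact that $\kappa_n(\Pi)$ takes only the endpoint values $\underline\mu,\overline\mu$. The ``structural fact'' you derive directly (that the $\theta_d$-shifted conditional laws of any $Q\in\Theta$ remain in $\Theta$) is exactly the paper's Lemma \ref{Char-P-right}, which the paper invokes where you re-prove it.
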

 \begin {proof} It follows from Lemma \ref {BorelMap} that there exists a Borel mapping $P_{\mu}: [\underline{\mu},\overline{\mu}]\rightarrow\Theta$ such that $E_{P_{\mu}}[\xi_n]=\kappa_n(\mu)$. Particularly, we have
 
 \[\lim_{n\rightarrow \infty}\bar{\xi}_n =\mu, \ P_{\mu}\emph{-a.s.} \]
 For any $\phi\in C_{b, Lip}(\mathbb{R}^d),  \ n\in\mathbb{N}$, set 
\begin{eqnarray} \label{P-Pi}
E_{P^{\Pi}}[\phi(\xi_1, \cdots, \xi_n)]=E_P\big[E_{P_{\pi(x_1, \cdots, x_d)}}[\phi(x_1,\cdots,x_d, \xi_{1}, \cdots, \xi_{n-d})]|^{x_i=\xi_i}_{i=1,\cdots,d}\big].
\end {eqnarray}
 Clearly, $P^{\Pi}\in\Theta$, $P^{\Pi}=P$ on $\mathcal{F}_d$ , and $P^{\Pi}(\xi_{n+d} | \mathcal{F}_d)(\omega)=P_{\Pi(\omega)}(\xi_n)=\kappa_n(\Pi(\omega))$, $P$-a.s. 
 
 We denote by $\Theta_P^{\Pi}$ the set of probabilities satisfying \[\tilde {P}^{\Pi}\in\Theta, \ \tilde{P}^{\Pi}=P\  \textmd{on} \ \mathcal{F}_d, \ \textmd{and} \ \tilde{P}^{\Pi}(\xi_{n+d} | \mathcal{F}_d)(\omega)=\kappa_n(\Pi(\omega)), \  P\textmd{-a.s.} \]  It is easily seen that $\Theta^{\Pi}_{P}$ is a nonempty convex subset of $\Theta$.  
  
We first prove (\ref {LLN-Fd-as}) holds for any $\tilde{P}^{\Pi}\in\Theta^{\Pi}_{P}$. Actually,
\begin {eqnarray*}
& &\tilde{P}^{\Pi}( \lim_{n\rightarrow \infty}\bar{\xi}_n =\Pi)\\
&=& E_{P}[\tilde{P}^{\Pi}(\lim_{n\rightarrow \infty}\bar{\xi}_n =\Pi|\mathcal{F}_d)]\\
&=& \int_{\Omega}\tilde{P}^{\Pi}(\lim_{n\rightarrow \infty}\bar{\xi}_n =\Pi(\omega)|\mathcal{F}_d)(\omega)P(d\omega).
\end {eqnarray*}

By Lemma \ref {Char-P-right} below,  we have $\tilde{P}^{\Pi}(\cdot \ | \ \mathcal{F}_d)(\omega)\circ\theta_d^{-1}\in\Theta$, which combining the fact $\tilde{P}^{\Pi}(\xi_{n+d} | \mathcal{F}_d)(\omega)=\kappa_n(\Pi(\omega)$ gives that $\tilde{P}^{\Pi}(\lim\limits_{n\rightarrow \infty}\bar{\xi}_n =\Pi(\omega)|\mathcal{F}_d)(\omega)=1$ by Lemma \ref{lemma-LLN-as}. So, $\tilde{P}^{\Pi}( \lim\limits_{n\rightarrow \infty}\bar{\xi}_n =\Pi)=1.$

Now we prove the last assertion of the theorem. We assert that $\Theta^{\Pi}_{P}$ is also closed. Actually, let $\{\tilde {P}^{\Pi, m}\}\subset \Theta^{\Pi}_{P}$ converge weakly to $\tilde{P}$. Then, for any $\phi\in C_{b, Lip}(\mathbb{R}^d)$ and $Y\in L^{\gamma}_{\mathbb{E}}(\Omega)$,  we have 
\begin {eqnarray*}\lim_mE_P\big[\phi(\xi_1,\cdots,\xi_d)E_{\tilde{P}^{\Pi, m}}[Y\circ\theta_d|\mathcal{F}_d]\big]&=&\lim_mE_{\tilde{P}^{\Pi, m}}\big[\phi(\xi_1,\cdots,\xi_d)Y\circ\theta_d\big]\\
&=&E_{\tilde{P}}\big[\phi(\xi_1,\cdots,\xi_d)Y\circ\theta_d\big]\\
&=&E_{\tilde{P}}\big[\phi(\xi_1,\cdots,\xi_d)E_{\tilde{P}}[Y\circ\theta_d|\mathcal{F}_d]\big].
\end {eqnarray*} which implies that $\tilde{P}=P$ on $\mathcal{F}_d$, and $E_{\tilde{P}^{\Pi, m}}[Y\circ\theta_d|\mathcal{F}_d]$ converges weakly to $E_{\tilde{P}}[Y\circ\theta_d|\mathcal{F}_d]$ in $L^{\gamma}(\Omega, \mathcal{F}_d, P)$ for any $Y\in L^{\gamma}_E(\Omega)$. Particularly, we have
\begin {eqnarray*}E_{P}\big[\phi(\xi_1,\cdots,\xi_d)E_{\tilde{P}}[\xi_{n+d}|\mathcal{F}_d]\big]&=&\lim_mE_{P}\big[\phi(\xi_1,\cdots,\xi_d)E_{\tilde{P}{\Pi, m}}[\xi_{n+d}|\mathcal{F}_d]\big]\\
&=&E_{P}\big[\phi(\xi_1,\cdots,\xi_d)\kappa_n(\Pi)\big].
\end {eqnarray*}
It follows that $E_{\tilde{P}}[\xi_{n+d}|\mathcal{F}_d]=\kappa_n(\Pi)$, $P$-a.s., by which we get $\tilde{P}\in\Theta^{\Pi}_{P}$.

By Krein-Milman Theorem, $\Theta^{\Pi, e}_P$ is nonempty. For $P^{\Pi}\in\Theta^{\Pi, e}_P$,  we prove that $P^{\Pi}\in\Theta^{e}$ if $P_{1,d}\in\Xi_d^e$. Assume there are $P^1, P^2\in\Theta$ and $\alpha\in(0,1)$ such that $P^{\Pi}=\alpha P^1+(1-\alpha)P^2$. Since $P_{1,d}\in\Xi_d^e$, we get $P^1=P^2=P^{\Pi}=P$ on $\mathcal{F}_d$. Then \[E_{P^{\Pi}}[\cdot|\mathcal{F}_d]=\alpha E_{P^{1}}[\cdot|\mathcal{F}_d]+(1-\alpha)E_{P^{2}}[\cdot|\mathcal{F}_d], \  P\textmd{-a.s.}\] Particularly, 
\[\kappa_n(\Pi)=E_{P^{\Pi}}[\xi_{n+d}|\mathcal{F}_d]=\alpha E_{P^{1}}[\xi_{n+d}|\mathcal{F}_d]+(1-\alpha)E_{P^{2}}[\xi_{n+d}|\mathcal{F}_d].\] 
 Since $\underline{\mu}\le E_{P^{1}}[\xi_{n+d}|\mathcal{F}_d], E_{P^{2}}[\xi_{n+d}|\mathcal{F}_d]\le\overline{\mu}$
and $\kappa_n(\Pi)=\underline{\mu}$ or $\overline{\mu}$, we get \[E_{P^{1}}[\xi_{n+d}|\mathcal{F}_d]=E_{P^{2}}[\xi_{n+d}|\mathcal{F}_d]=\kappa_n(\Pi),\] which means $P^1, P^2\in\Theta^{\Pi}_P$. So we get $P^{\Pi}=P^1=P^2$ since $P^{\Pi}\in\Theta^{\Pi, e}_P$.
 \end {proof}

\begin {corollary} \label {cor-LLN-Fd} Let $\Theta_0$ be a weakly compact subset of $\mathcal{M}_1(\Omega)$ such that $\mathbb{E}[X]=\sup_{P\in\Theta_0}E_P[X]$ for $X\in C_b(\Omega)$. Then, for  any $\mathcal{F}_d$-measurable random variable $\Pi=\pi(\xi_1,\cdots,\xi_d)$ with values in $[\underline{\mu}, \overline{\mu}]$ and $d\in\mathbb{N}$, there exists a probability $P^{\Pi}\in\Theta_0$ such that
\begin {eqnarray*}\label {LLN-Fd}
\lim_{n\rightarrow \infty}\bar{\xi}_n =\Pi, \ P^{\Pi}\textmd{-a.s.}
\end {eqnarray*}
\end {corollary}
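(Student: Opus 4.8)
The plan is to isolate exactly what is new relative to Theorem \ref{Thm-LLN-Fd}: the conclusion and the slicewise reduction are identical, and the only additional demand is that the witnessing probability be drawn from the prescribed weakly compact representing set $\Theta_0$ rather than from the maximal set $\Theta$. First I would record that $\Theta_0\subset\Theta$ (any $P\in\Theta_0$ satisfies $E_P[X]\le\sup_{Q\in\Theta_0}E_Q[X]=\mathbb{E}[X]$ on $C_b(\Omega)$), and that the representation by $\Theta_0$ extends from $C_b(\Omega)$ to $L^\gamma_{\mathbb{E}}(\Omega)$: for $Y\in L^\gamma_{\mathbb{E}}$ and $Y_m\in C_b$ with $\|Y-Y_m\|_\gamma\to0$ one has $\sup_{P\in\Theta_0}|E_P[Y]-E_P[Y_m]|\le\|Y-Y_m\|_1\to0$, so $\sup_{P\in\Theta_0}E_P[Y]=\mathbb{E}[Y]$. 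Since each coordinate $\xi_k$ is continuous on $\Omega=\mathbb{R}^{\mathbb{N}}$ and $\{\xi_k\}$ is uniformly integrable over $\Theta$ (as $\sup_{P\in\Theta}E_P[|\xi_k|^\gamma]=\mathbb{E}[|\xi_k|^\gamma]<\infty$ with $\gamma>1$), weak convergence of a sequence in $\Theta_0$ preserves the first moments $E_\cdot[\xi_k]$. The reduction is then exactly as in Theorem \ref{Thm-LLN-Fd}: it suffices to exhibit a single $R\in\Theta_0$ with $E_R[\xi_{n+d}\mid\mathcal{F}_d]=\kappa_n(\Pi)$, $R$-a.s., for every $n$, for then Lemma \ref{Char-P-right} places the shifted conditional laws $R(\cdot\mid\mathcal{F}_d)(\omega)\circ\theta_d^{-1}$ in $\Theta$ with the i.i.d.\ structure, and Lemma \ref{lemma-LLN-as} applied slicewise with $\mu=\Pi(\omega)$ gives $R(\lim_n\bar\xi_n=\Pi)=1$.

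Next I would upgrade Lemma \ref{BorelMap} to $\Theta_0$, to handle the constant targets. Fixing $\kappa\in\{\underline\mu,\overline\mu\}^{\mathbb{N}}$ with signs $\sigma(k)$, choose near-maximizers $P_m\in\Theta_0$ of $E_\cdot[\sum_{k\le m}\sigma(k)\xi_k]$; since $\sup_{\Theta_0}$ agrees with $\mathbb{E}$ on $L^\gamma_{\mathbb{E}}$ and $E_{P_m}[\sigma(k)\xi_k]\le\mathbb{E}[\sigma(k)\xi_k]$ termwise, near-maximality forces $E_{P_m}[\xi_k]\to\kappa_k$ for each $k$. A weak cluster point $P\in\Theta_0$ then satisfies $E_P[\xi_k]=\kappa_k$ for all $k$ by the moment-stability above, so $K^0_\kappa:=\{P\in\Theta_0:E_P[\xi_n]=\kappa_n\ \forall n\}$ is nonempty and weakly closed; a Borel selection $\mu\mapsto P_\mu\in\Theta_0$ follows from Stroock--Varadhan as in Lemma \ref{BorelMap}.

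The crux is the random target. Put $\rho=\tfrac{\overline\mu-\underline\mu}{2}$, $Y_n=\xi_{n+d}-\tfrac{\overline\mu+\underline\mu}{2}$, and $g_n=\sigma_{\Pi}(n)\in\{\pm1\}$, which is $\mathcal{F}_d$-measurable. For every $R\in\Theta$ the conditional means satisfy $E_R[\xi_{n+d}\mid\mathcal{F}_d]\in[\underline\mu,\overline\mu]$, hence $g_n\,E_R[Y_n\mid\mathcal{F}_d]\le\rho$ pointwise, so the functional
\[\Psi(R):=\sum_{n\ge1}2^{-n}E_R[g_nY_n]\]
obeys $\Psi(R)\le\rho$ for every $R\in\Theta$, with equality precisely when $E_R[\xi_{n+d}\mid\mathcal{F}_d]=\kappa_n(\Pi)$ $R$-a.s.\ for all $n$. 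Theorem \ref{Thm-LLN-Fd} already furnishes some $R\in\Theta$ attaining $\Psi=\rho$; the goal is to attain it over $\Theta_0$. I would first reduce to finitely valued $\Pi=\sum_j\mu^{(j)}\mathbf{1}_{A_j}$ with $A_j\in\mathcal{F}_d$ chosen as weak-continuity sets, so each $g_n$ becomes a finite combination of indicators with negligible boundaries; then $\Psi$ is a convergent combination of expectations of continuous functions times the uniformly integrable coordinates, to which a weak-limit argument over a maximizing sequence in the weakly compact set $\Theta_0$ applies, yielding $R\in\Theta_0$ with $\Psi(R)=\rho$. The general Borel $\pi$ is recovered by approximating $\Pi$ by such simple functions together with a diagonal/stability argument for the a.s.-limit event.

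The main obstacle is structural: $\Theta_0$, being only weakly compact, is neither convex nor stable under pasting a marginal with conditional kernels, so the explicit conditional measure $P^\Pi$ produced in Theorem \ref{Thm-LLN-Fd} need not lie in $\Theta_0$; compounding this, $g_n$ is built from the merely Borel function $\pi$, so neither $\sup_{\Theta_0}\Psi=\rho$ nor the passage of $\Psi$ through a weak limit is automatic. The heart of the argument is therefore the continuity-set/approximation reduction that converts the discontinuous targets $g_n$ into ones compatible with weak convergence, after which weak compactness of $\Theta_0$ and the uniform upper bound $\Psi\le\rho$ (valid on all of $\Theta$) close the gap.
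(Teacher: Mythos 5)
Your reduction to conditional means and your treatment of constant targets (the $\Theta_0$-version of Lemma \ref{BorelMap} via near-maximizers) are sound, but the crux of your argument --- attaining $\Psi=\rho$ over $\Theta_0$ --- has a genuine gap, and it is exactly the point your last paragraph flags without resolving. First, the continuity-set reduction is impossible for general Borel $\pi$: if, say, $\Pi=\underline{\mu}+(\overline{\mu}-\underline{\mu})\mathbf{1}_{\mathbb{Q}^d}(\xi_1,\dots,\xi_d)$, then $\partial\{(\xi_1,\dots,\xi_d)\in\mathbb{Q}^d\}$ is the whole space, so $\{\Pi=\overline{\mu}\}$ is a continuity set for no probability measure whatsoever, and no choice of partition can repair this. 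Second, even for nice $\Pi$, the functional $\Psi$ is affine but not weakly upper semicontinuous (its integrands contain the Borel factors $g_n$), so two things fail at once: (i) $\sup_{\Theta_0}\Psi$, which equals the sup over $\mathrm{co}(\Theta_0)$, may a priori be strictly smaller than $\rho=\sup_{\overline{\mathrm{co}}(\Theta_0)}\Psi$, because the measure from Theorem \ref{Thm-LLN-Fd} attaining $\rho$ lies only in the weak closure; and (ii) even if the sup over $\Theta_0$ were $\rho$, a weak limit of a maximizing sequence need not attain it, since conditional expectations do not pass through weak limits when the $\mathcal{F}_d$-marginals vary along the sequence --- the closedness argument inside the proof of Theorem \ref{Thm-LLN-Fd} works only because all competitors there share the fixed marginal $P$ on $\mathcal{F}_d$. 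Your final ``diagonal/stability'' step inherits the same defect: $\kappa_n(\cdot)$ is discontinuous in $\mu$ and the event $\{\lim_n\bar{\xi}_n=\Pi\}$ is not stable under approximation of $\Pi$, so equality for simple approximants does not transfer to general Borel $\pi$.

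The missing idea is that one never needs to redo the optimization inside $\Theta_0$: weak compactness of a representing set already forces $\Theta^e\subset\Theta_0$. Indeed, Hahn--Banach separation gives $\overline{\mathrm{co}}(\Theta_0)=\Theta$, and Milman's partial converse to the Krein--Milman theorem then yields that the extreme points of $\Theta=\overline{\mathrm{co}}(\Theta_0)$ lie in the compact set $\Theta_0$. Now take any $P\in\Theta$ whose marginal $P_{1,d}$ is an extreme point of $\Xi_d$; the last assertion of Theorem \ref{Thm-LLN-Fd} produces $P^{\Pi}\in\Theta^e$ with $\lim_n\bar{\xi}_n=\Pi$, $P^{\Pi}$-a.s., and this $P^{\Pi}$ automatically belongs to $\Theta_0$. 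This is the paper's proof; it sidesteps entirely the non-convexity and non-stability under pasting of $\Theta_0$ on which your direct construction founders.
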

\begin {proof} Since $\Theta_0$ represents $\mathbb{E}$, it follows from Hahn-Banach theorem that $\overline{\textmd{co}}(\Theta_0)=\Theta$. By Krein-Milman Theorem, we have $ \Theta^e\subset\Theta_0.$  For any $\mu\in\Xi_d^e$, it follows from Theorem \ref {Thm-LLN-Fd} that there exists $P^{\Pi}\in\Theta^e\subset\Theta_0$ such that $P^{\Pi}_{1,d}=\mu$ and
\begin {eqnarray*}\label {LLN-Fd}
\lim_{n\rightarrow \infty}\bar{\xi}_n =\Pi, \ P^{\Pi}\textmd{-a.s.}
\end {eqnarray*}
\end {proof}
  
  We give an example to show that for general random variable $\Pi$ Theorem \ref {Thm-LLN-Fd} and Corollary \ref {cor-LLN-Fd} may not hold. Define $\phi(x)=\underline{\mu}$ for $x\ge\frac{\underline{\mu}+\overline{\mu}}{2}$,  and $\phi(x)=\overline{\mu}$ for $x<\frac{\underline{\mu}+\overline{\mu}}{2}$. Then, it is clear that $\{\omega\in\Omega\ | \ \phi(\xi_{\infty}^{\triangledown})=\xi_{\infty}^{\triangledown}\}$ is empty.  If Theorem \ref {Thm-LLN-Fd} or Corollary \ref {cor-LLN-Fd}   holds for $\Pi=\phi(\xi_{\infty}^{\triangledown})$, then there is $P^{\Pi}\in\Theta$ such that
  \[\phi(\xi_{\infty}^{\triangledown})=\phi(\lim_n\bar{\xi}_n)=\lim_n\bar{\xi}_n=\xi_{\infty}^{\triangledown}, \ P^{\Pi}\textmd{-a.s.,}\] which is a contradiction.

\subsection {Tail $\sigma$-algebra of $<$i.i.d random variables} 

Let $\overleftarrow{\mathbb{E}}$ be a regular sublinear expectation on $(\Omega, C_b(\Omega))$ such that $\xi:=(\xi_1, \xi_2, \cdots)$ is a sequence of $<$i.i.d random variables.   

Set
\begin {eqnarray*}
\overleftarrow{\Theta}:=\{P\in \mathcal{M}_1(\Omega)|& & E_{P}[X]\le \overleftarrow{\mathbb{E}}[X], \ \textmd{for any} \ X\in C_b(\Omega)\}.
\end {eqnarray*}
It is easy to see that $\overleftarrow{\Theta}$ is weakly compact and convex. Let $\overleftarrow{\Theta}^e$ be the set of the extreme points of $\overleftarrow{\Theta}$. By Krein-Milman Theorem, one has $\overleftarrow{\Theta}=\overline{\textmd{co}}(\overleftarrow{\Theta}^e)$, which implies that \[\sup_{P\in \overleftarrow{\Theta}^e}E_P[X]=\sup_{P\in \overleftarrow{\Theta}}E_P[X]=\overleftarrow{\mathbb{E}}[X], \ \textit{for} \ X\in C_b(\Omega).\]

In this section, we show that the tail $\sigma$-algebra of $<$i.i.d random variables is $\emph{trivial}$ under each $P\in \overleftarrow{\Theta}^e$ (Theorem \ref {Triviality-left} below). To this end, we give a characterization of the set $\overleftarrow{\Theta}$. 

Set
\begin {eqnarray*}
\overleftarrow{\Theta}_0:=\{P\in \overleftarrow{\Theta}|& & E_{P}[\phi_n(\xi)\psi_m(\theta_n\xi)]\le \overleftarrow{\mathbb{E}}[\phi_n(\xi)] E_P[\psi_m(\theta_n\xi)],  \ \textmd{for any}\\
& & \ \phi_n\in C_{b,Lip}(\mathbb{R}^n), \ \psi_m\in C^{+}_{b, Lip}(\mathbb{R}^m),  \ m, n\in \mathbb{N}\}.
\end {eqnarray*}
In the above, we abbreviate $\phi_n(\xi_1, \cdots, \xi_n)$ as $\phi_n(\xi)$ for $\phi_n\in C_{b, Lip}(\mathbb{R}^n)$. 

\begin {lemma} \label {Char-P-left}
$\overleftarrow{\Theta}=\overleftarrow{\Theta}_0$.
\end {lemma}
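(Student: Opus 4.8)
The goal is to show $\overleftarrow{\Theta}=\overleftarrow{\Theta}_0$. Since $\overleftarrow{\Theta}_0\subset\overleftarrow{\Theta}$ by definition, the plan is to prove the reverse inclusion: every $P\in\overleftarrow{\Theta}$ satisfies the factorization-type inequality defining $\overleftarrow{\Theta}_0$. So I would fix $P\in\overleftarrow{\Theta}$, test functions $\phi_n\in C_{b,Lip}(\mathbb{R}^n)$, $\psi_m\in C^+_{b,Lip}(\mathbb{R}^m)$, and aim to establish
\[
E_P[\phi_n(\xi)\psi_m(\theta_n\xi)]\le \overleftarrow{\mathbb{E}}[\phi_n(\xi)]\,E_P[\psi_m(\theta_n\xi)].
\]

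The key structural fact to exploit is the $<$i.i.d.\ property: $\xi_k$ is independent of $(\xi_{k+1},\cdots,\xi_{k+j})$, so the block $(\xi_1,\cdots,\xi_n)$ is independent of the future block $(\xi_{n+1},\cdots,\xi_{n+m})=\theta_n\xi$ in the sublinear sense. The natural first step is therefore to compute $\overleftarrow{\mathbb{E}}[\phi_n(\xi)\psi_m(\theta_n\xi)]$ using the independence definition. Because $\psi_m\ge 0$, for each fixed value $\mathbf{y}$ of $\theta_n\xi$ the map $\mathbf{x}\mapsto \phi_n(\mathbf{x})\psi_m(\mathbf{y})$ is $\phi_n(\mathbf{x})$ scaled by a nonnegative constant, so the inner sublinear expectation factors as $\overleftarrow{\mathbb{E}}[\phi_n(\xi)\psi_m(\mathbf{y})]=\overleftarrow{\mathbb{E}}[\phi_n(\xi)]\psi_m(\mathbf{y})$; this is where the sign hypothesis $\psi_m\in C^+_{b,Lip}$ is essential, since sublinear expectation pulls out only nonnegative scalars. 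Then the independence identity gives $\overleftarrow{\mathbb{E}}[\phi_n(\xi)\psi_m(\theta_n\xi)]=\overleftarrow{\mathbb{E}}\big[\overleftarrow{\mathbb{E}}[\phi_n(\xi)]\psi_m(\theta_n\xi)\big]=\overleftarrow{\mathbb{E}}[\phi_n(\xi)]\,\overleftarrow{\mathbb{E}}[\psi_m(\theta_n\xi)]$, again using nonnegativity to factor out the constant $\overleftarrow{\mathbb{E}}[\phi_n(\xi)]$.

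With that identity in hand, the inequality for an arbitrary $P\in\overleftarrow{\Theta}$ follows by domination: since $E_P\le\overleftarrow{\mathbb{E}}$ on $C_b(\Omega)$ and $\phi_n(\xi)\psi_m(\theta_n\xi)\in C_b(\Omega)$,
\[
E_P[\phi_n(\xi)\psi_m(\theta_n\xi)]\le \overleftarrow{\mathbb{E}}[\phi_n(\xi)\psi_m(\theta_n\xi)]=\overleftarrow{\mathbb{E}}[\phi_n(\xi)]\,\overleftarrow{\mathbb{E}}[\psi_m(\theta_n\xi)].
\]
To match the target I would replace $\overleftarrow{\mathbb{E}}[\psi_m(\theta_n\xi)]$ by $E_P[\psi_m(\theta_n\xi)]$; but the definition of $\overleftarrow{\Theta}_0$ uses exactly $E_P[\psi_m(\theta_n\xi)]$ on the right, and since $\psi_m\ge0$ we have $E_P[\psi_m(\theta_n\xi)]\le\overleftarrow{\mathbb{E}}[\psi_m(\theta_n\xi)]$, so I must be careful about which direction is needed. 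The cleaner route is to show the inequality holds with $E_P[\psi_m(\theta_n\xi)]$ directly, which suggests conditioning: write $E_P[\phi_n(\xi)\psi_m(\theta_n\xi)]=E_P\big[E_P[\phi_n(\xi)\mid\mathcal{G}_{n+1}]\psi_m(\theta_n\xi)\big]$ and bound the conditional expectation $E_P[\phi_n(\xi)\mid\mathcal{G}_{n+1}]\le\overleftarrow{\mathbb{E}}[\phi_n(\xi)]$ pointwise $P$-a.s.

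Accordingly, the main obstacle I anticipate is establishing the pointwise conditional bound $E_P[\phi_n(\xi)\mid\mathcal{G}_{n+1}]\le\overleftarrow{\mathbb{E}}[\phi_n(\xi)]$, $P$-a.s., i.e.\ showing that conditioning on the future cannot raise the $P$-expectation of the past block above its sublinear expectation. This is the genuine content of the lemma and is where the $<$i.i.d.\ (past-independent-of-future) structure must be converted from a statement about $\overleftarrow{\mathbb{E}}$ into a statement about the regular conditional distributions of a fixed representing measure $P$. I would attack it by testing against arbitrary nonnegative $\mathcal{G}_{n+1}$-measurable functions of the form $\psi_m(\theta_n\xi)$, using the independence-derived identity above together with the density of such cylinder functions in the relevant $L^\gamma$-space (the standing assumption $\{\xi_k\}\subset L^\gamma_{\overleftarrow{\mathbb{E}}}(\Omega)$ guarantees the needed integrability for a limiting argument). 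Once the conditional bound is secured, integrating against $\psi_m(\theta_n\xi)\ge0$ and using $E_P[\psi_m(\theta_n\xi)]$ on the right yields the defining inequality of $\overleftarrow{\Theta}_0$, completing the reverse inclusion and hence the equality.
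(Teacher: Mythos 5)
Your proposal stalls exactly at the step you yourself flag as the ``genuine content,'' and the plan you give for that step is circular. The conditional bound $E_P[\phi_n(\xi)\mid\mathcal{G}_{n+1}]\le\overleftarrow{\mathbb{E}}[\phi_n(\xi)]$, $P$-a.s., is equivalent (via exactly the test functions you name) to the defining inequality of $\overleftarrow{\Theta}_0$ that you are trying to prove; yet the only tool you offer for it is ``the independence-derived identity above'' plus domination $E_P\le\overleftarrow{\mathbb{E}}$, and, as you already noticed, that combination only produces the bound with $\overleftarrow{\mathbb{E}}[\psi_m(\theta_n\xi)]$ on the right --- which is too large, since $E_P[\psi_m(\theta_n\xi)]\le\overleftarrow{\mathbb{E}}[\psi_m(\theta_n\xi)]$ for $\psi_m\ge0$. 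So testing the conditional expectation against $\psi_m(\theta_n\xi)$ just regenerates the inequality you already have, not the one you need; nothing in the proposal closes this loop. Two secondary problems: the factorization $\overleftarrow{\mathbb{E}}[\phi_n(\xi)\psi_m(\theta_n\xi)]=\overleftarrow{\mathbb{E}}[\phi_n(\xi)]\,\overleftarrow{\mathbb{E}}[\psi_m(\theta_n\xi)]$ requires $\overleftarrow{\mathbb{E}}[\phi_n(\xi)]\ge0$ to pull the constant out of a sublinear expectation, and block-against-block independence is not part of the $<$i.i.d.\ definition --- it has to be obtained by peeling off $\xi_1,\xi_2,\dots$ one at a time, which does work for this product because $\psi_m\ge0$.

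The direct route can in fact be completed by a one-line modification: center $\phi_n$ \emph{before} applying independence. Run your iteration on $\phi_n-c$ with $c:=\overleftarrow{\mathbb{E}}[\phi_n(\xi)]$; at each stage the inner expectation produces $g_k(\cdot)-c$ (constants pass through sublinear expectations additively), and the final constant is $g_n-c=0$, so $\overleftarrow{\mathbb{E}}\big[(\phi_n(\xi)-c)\psi_m(\theta_n\xi)\big]=0$. Then domination alone gives, for every $P\in\overleftarrow{\Theta}$, $E_P\big[(\phi_n(\xi)-c)\psi_m(\theta_n\xi)\big]\le0$, which is precisely $E_P[\phi_n(\xi)\psi_m(\theta_n\xi)]\le\overleftarrow{\mathbb{E}}[\phi_n(\xi)]\,E_P[\psi_m(\theta_n\xi)]$. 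Note this measure-by-measure verification is not what the paper does either: the paper argues indirectly, showing that $\overleftarrow{\Theta}_0$ is convex, weakly compact, and still represents $\overleftarrow{\mathbb{E}}$ on $C_b(\Omega)$ (the $<$i.i.d.\ property enters only to evaluate suprema of cylinder functions over $\overleftarrow{\Theta}_0$), and then concluding $\overleftarrow{\Theta}=\overleftarrow{\Theta}_0$ by Hahn--Banach separation.
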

\begin {proof}
We first give some properties of $\overleftarrow{\Theta}_0$.

1)  $\overleftarrow{\Theta}_0$ is convex.

For $P_1, P_2\in \overleftarrow{\Theta}_0 $, and $\alpha\in (0,1)$, clearly we have $Q:=\alpha P_1+(1-\alpha) P_2\in \overleftarrow{\Theta}$.  For any  $m, n\in \mathbb{N}$ and $\phi_n\in C_{b,Lip}(\mathbb{R}^n), \ \psi_m\in C^{+}_{b, Lip}(\mathbb{R}^m)$,
\begin {eqnarray*}
& &E_Q[\phi_n(\xi)\psi_m(\theta_n\xi)]\\
&=&\alpha E_{P_1}[\phi_n(\xi)\psi_m(\theta_n\xi)]+(1-\alpha)E_{P_2}[\phi_n(\xi)\psi_m(\theta_n\xi)]\\
&\le& \alpha \overleftarrow{\mathbb{E}}[\phi_n(\xi)]E_{P_1}[\psi_m(\theta_n\xi)] +(1-\alpha) \overleftarrow{\mathbb{E}}[\phi_n(\xi)]E_{P_2}[\psi_m(\theta_n\xi)]\\
&=& \overleftarrow{\mathbb{E}}[\phi_n(\xi)]E_{Q}[\psi_m(\theta_n\xi)].
\end {eqnarray*}

2) $\overleftarrow{\Theta}_0$ is weakly compact.

Let $(P_k)\subset \overleftarrow{\Theta}_0$ converge weakly to $P$. Then $P$ belongs to $\overleftarrow{\Theta}$. For any  $m, n\in \mathbb{N}$ and $\phi_n\in C_{b,Lip}(\mathbb{R}^n), \ \psi_m\in C^{+}_{b, Lip}(\mathbb{R}^m)$,
\begin {eqnarray*}
& &E_P[\phi_n(\xi)\psi_m(\theta_n\xi)]\\
&=&\lim_{k\rightarrow\infty}E_{P_k}[\phi_n(\xi)\psi_m(\theta_n\xi)]\\
&\le&\lim_{k\rightarrow\infty} \overleftarrow{\mathbb{E}}[\phi_n(\xi)]E_{P_k}[\psi_m(\theta_n\xi)] \\
&=& \overleftarrow{\mathbb{E}}[\phi_n(\xi)] E_{P}[\psi_m(\theta_n\xi)].
\end {eqnarray*}

3) $\sup\limits_{P\in\overleftarrow{\Theta}_0}E_{P}[X]=\overleftarrow{\mathbb{E}}[X]$, for $X\in C_b(\Omega)$.

Set $Lip(\Omega)=\{\varphi(\xi_1, \cdots, \xi_n)| \ \varphi\in C_{b, Lip}(\mathbb{R}^n), \ n\in \mathbb{N}\}$. For $X\in Lip(\Omega)$, the assertion holds by the assumption that $\xi_1, \xi_2, \cdots$ is $<$i.i.d under $\overleftarrow{\mathbb{E}}$. Noting that $Lip(\Omega)$ is dense in $C_b(\Omega)$ with respect to the norm $\overleftarrow{\mathbb{E}}[|\cdot|]$, we get the desired result.

Then, it follows from Hahn-Banach theorem that $\overleftarrow{\Theta}=\overleftarrow{\Theta}_0$.
\end {proof}

The original manuscript of this paper with the above result was completed in September, 2019. Recently, we found a similar result in \cite {HLL21}.
\begin {theorem} \label {Triviality-left} Let $\overleftarrow{\Theta}^e$ be the subset of the extreme points of the convex set $\overleftarrow{\Theta}$. Then, for any $A\in\mathcal{T}$ and $P\in\overleftarrow{\Theta}^e$, we have
\[\textmd{either} \ P(A)=0, \ \textit{or} \ P(A)=1.\]
\end {theorem}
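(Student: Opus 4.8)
The plan is to argue by contradiction, exploiting the convexity of $\overleftarrow{\Theta}$ together with the factorization characterization of Lemma \ref{Char-P-left}. Fix $P\in\overleftarrow{\Theta}^e$ and suppose, towards a contradiction, that some $A\in\mathcal{T}$ satisfies $0<P(A)<1$. I introduce the conditional probabilities $P_1(\cdot)=P(\cdot\cap A)/P(A)$ and $P_2(\cdot)=P(\cdot\cap A^c)/P(A^c)$, so that $P=P(A)P_1+P(A^c)P_2$ is a proper convex combination. Since $P_1(A)=1\neq0=P_2(A)$, we have $P_1\neq P_2$; hence, once I show $P_1,P_2\in\overleftarrow{\Theta}$, the measure $P$ cannot be an extreme point of $\overleftarrow{\Theta}$, contradicting $P\in\overleftarrow{\Theta}^e$ and forcing $P(A)\in\{0,1\}$.

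The core of the argument is therefore to verify that $P_1,P_2\in\overleftarrow{\Theta}$, i.e. that $E_{P_i}[X]\le\overleftarrow{\mathbb{E}}[X]$ for all $X\in C_b(\Omega)$. The decisive point is that a tail event is measurable with respect to the shifted coordinates at every level: $A\in\mathcal{G}_{n+1}=\sigma(\theta_n\xi)$ for all $n$, so $\mathbbm{1}_A$ is a nonnegative $\sigma(\theta_n\xi)$-measurable function, and I can approximate it in $L^1(P)$ by functions $\psi^{(k)}(\theta_n\xi)$ with $\psi^{(k)}\in C^{+}_{b,Lip}$ and $0\le\psi^{(k)}\le1$. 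By Lemma \ref{Char-P-left}, $P\in\overleftarrow{\Theta}=\overleftarrow{\Theta}_0$, so $P$ obeys $E_P[\phi_n(\xi)\psi^{(k)}(\theta_n\xi)]\le\overleftarrow{\mathbb{E}}[\phi_n(\xi)]E_P[\psi^{(k)}(\theta_n\xi)]$ for each $\phi_n\in C_{b,Lip}(\mathbb{R}^n)$. Letting $k\to\infty$ — the left side converges since $\phi_n$ is bounded and the right side since $E_P[\psi^{(k)}(\theta_n\xi)]\to P(A)$ — yields $E_P[\phi_n(\xi)\mathbbm{1}_A]\le\overleftarrow{\mathbb{E}}[\phi_n(\xi)]\,P(A)$, whence $E_{P_1}[\phi_n(\xi)]\le\overleftarrow{\mathbb{E}}[\phi_n(\xi)]$.

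To pass from cylinder functions to arbitrary $X\in C_b(\Omega)$, I would use that $Lip(\Omega)$ is dense in $C_b(\Omega)$ under the norm $\overleftarrow{\mathbb{E}}[|\cdot|]$, as in the proof of Lemma \ref{Char-P-left}. Taking $\phi_n(\xi)\to X$ in this norm and noting that $E_{P_1}[|Y|]\le\overleftarrow{\mathbb{E}}[|Y|]/P(A)$ for all $Y$, one has both $E_{P_1}[\phi_n(\xi)]\to E_{P_1}[X]$ and $\overleftarrow{\mathbb{E}}[\phi_n(\xi)]\to\overleftarrow{\mathbb{E}}[X]$, so the inequality survives in the limit and $P_1\in\overleftarrow{\Theta}$. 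Running the identical argument with the tail event $A^c$ in place of $A$ gives $P_2\in\overleftarrow{\Theta}$, completing the contradiction.

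I expect the main obstacle to be the insertion of $\mathbbm{1}_A$ into the factorization inequality in the second step. This is precisely where the tail property is indispensable: it is only because $A\in\sigma(\theta_n\xi)$ for every $n$ that $\mathbbm{1}_A$ qualifies, in the limit, as a legitimate \emph{future} factor at each truncation level, which is what makes the single domination estimate $E_{P_1}[\phi_n(\xi)]\le\overleftarrow{\mathbb{E}}[\phi_n(\xi)]$ available for all $n$ at once. The supporting approximation steps — the $L^1(P)$-approximation of $\mathbbm{1}_A$ by nonnegative Lipschitz cylinder functions of $\theta_n\xi$, and the density of $Lip(\Omega)$ in $C_b(\Omega)$ — are routine.
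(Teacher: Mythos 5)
Your argument is correct, and it takes a genuinely different route from the paper's, although both hinge on Lemma~\ref{Char-P-left} and on the extremality of $P$. The paper conditions on the whole tail $\sigma$-algebra at once: from the factorization inequality it deduces $E_P[\phi(\xi_1,\cdots,\xi_n)\,|\,\mathcal{G}_m]\le\overleftarrow{\mathbb{E}}[\phi(\xi_1,\cdots,\xi_n)]$, passes to $E_P[\,\cdot\,|\,\mathcal{T}]$ by backward martingale convergence, selects a regular conditional probability $P_{\omega}$ of $P[\,\cdot\,|\,\mathcal{T}]$ lying in $\overleftarrow{\Theta}$, and then asserts (its fact 2)) that extremality forces $P_{\omega}=P$, $P$-a.s., whence $P(A)=1_A(\omega)$ a.s. You instead fix a single tail event $A$ with $0<P(A)<1$, insert an $L^1(P)$-approximation of $1_A$ by nonnegative Lipschitz cylinder functions of $\theta_n\xi$ into the ``future'' slot of the factorization inequality for each $n$, conclude that $P(\cdot\,|\,A)$ and $P(\cdot\,|\,A^c)$ are dominated by $\overleftarrow{\mathbb{E}}$ on $Lip(\Omega)$ and hence---by the same density of $Lip(\Omega)$ in $C_b(\Omega)$ under $\overleftarrow{\mathbb{E}}[|\cdot|]$ that the paper itself invokes---lie in $\overleftarrow{\Theta}$, and then contradict extremality via $P=P(A)\,P(\cdot\,|\,A)+P(A^c)\,P(\cdot\,|\,A^c)$. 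Your route is more elementary: it needs no martingale convergence theorem and no regular conditional probabilities (whose membership in $\overleftarrow{\Theta}$ for every $\omega$ itself requires a selection/modification argument), and your two-point decomposition is in effect a proof of the paper's unproved fact 2), which is the least detailed step of the paper's argument. What the paper's route buys is a stronger structural byproduct: the conditional law given $\mathcal{T}$ stays in $\overleftarrow{\Theta}$, so all tail events are handled simultaneously rather than one at a time. The two steps you label routine are indeed so, but note they must be done in the right form: the approximants of $1_A$ must be cylinder functions $\psi^{(k)}(\xi_{n+1},\ldots,\xi_{n+m_k})$ with $\psi^{(k)}\in C^{+}_{b,Lip}(\mathbb{R}^{m_k})$ and $0\le\psi^{(k)}\le1$ for Lemma~\ref{Char-P-left} to apply (this follows from a monotone class argument plus regularity of the finite-dimensional marginals, and truncation to $[0,1]$ preserves the Lipschitz property), and an analogous approximation is implicitly used in the paper's own passage from the factorization inequality to the conditional-expectation bound.
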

\begin {proof} Let $P\in\overleftarrow{\Theta}$. For $m, n\in\mathbb{N}$, $m> n$, and $\phi\in C_{b,Lip}(\mathbb{R}^n)$, it follows from Lemma \ref {Char-P-left} that $E_P[\phi(\xi_1, \cdots, \xi_n)| \mathcal{G}_m]\le \overleftarrow{\mathbb{E}}[\phi(\xi_1, \cdots, \xi_n)]$, $P$-a.s. By the martingale convergence theorem, we have $E_P[\phi(\xi_1, \cdots, \xi_n)| \mathcal{G}_m]\rightarrow E_P[\phi(\xi_1, \cdots, \xi_n)| \mathcal{T}]$, $P$-a.s. So we have
\[E_P[\phi(\xi_1, \cdots, \xi_n)| \mathcal{T}]\le \overleftarrow{\mathbb{E}}[\phi(\xi_1, \cdots, \xi_n)],\]
and, consequently, for $X\in C_b(\Omega)$, $E_P[X| \mathcal{T}]\le \overleftarrow{\mathbb{E}}[X].$ Then we can choose a regular conditional probability $P_{\omega}$  of $P[\cdot |\mathcal{T}]$ such that $P^{\omega}\in \Theta$ for each $\omega\in \Omega$. So, the following two facts hold.
\begin{itemize}
  \item[1)] $P_{\omega}(A)=1_{A}(\omega)$, $P$-a.s., for $A\in\mathcal{T}$.
  \item[2)] If $P\in\overleftarrow{\Theta}^e$, $P_{\omega}=P$, $P$-a.s.
\end{itemize}
Then,  for $A\in\mathcal{T}$ and $P\in\overleftarrow{\Theta}^e$, $P(A)=1_{A}(\omega)$, $P$-a.s., which implies that $P(A)=0, \ or \ 1.$
\end {proof}
\begin {corollary}Let $\overleftarrow{\Theta}^e$ be the subset of the extreme points of the convex set $\overleftarrow{\Theta}$. Then, for any $P\in\overleftarrow{\Theta}^e$, $\xi_{\infty}^{\triangledown}$ (resp. $\xi_{\infty}^{\vartriangle}$) is equal to a constant, $P$-a.s.
\end {corollary}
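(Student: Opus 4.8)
The plan is to show that both $\xi_{\infty}^{\triangledown}$ and $\xi_{\infty}^{\vartriangle}$ are measurable with respect to the tail $\sigma$-algebra $\mathcal{T}$, and then to deduce the claim from the triviality established in Theorem \ref{Triviality-left}. Indeed, once tail-measurability is in hand, the corollary is just the classical fact that a random variable measurable with respect to a $P$-trivial $\sigma$-algebra must be $P$-a.s. constant.

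First I would verify the tail-measurability of $\xi_{\infty}^{\triangledown}=\limsup_{n\rightarrow\infty}\bar{\xi}_n$. The key observation is that for any fixed $m\in\mathbb{N}$ the initial segment contributes nothing in the limit, since $\frac{\xi_1+\cdots+\xi_m}{n}\rightarrow 0$ as $n\rightarrow\infty$. Hence
\[\xi_{\infty}^{\triangledown}=\limsup_{n\rightarrow\infty}\frac{\xi_1+\cdots+\xi_n}{n}=\limsup_{n\rightarrow\infty}\frac{\xi_{m+1}+\cdots+\xi_n}{n},\]
and the right-hand side is $\mathcal{G}_{m+1}$-measurable. As this holds for every $m$, we obtain that $\xi_{\infty}^{\triangledown}$ is measurable with respect to $\bigcap_m\mathcal{G}_m=\mathcal{T}$. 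The identical argument, with $\limsup$ replaced by $\liminf$, shows that $\xi_{\infty}^{\vartriangle}$ is $\mathcal{T}$-measurable as well.

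With tail-measurability secured, I would fix $P\in\overleftarrow{\Theta}^e$ and apply Theorem \ref{Triviality-left}: for every $c\in\mathbb{R}$ the event $\{\xi_{\infty}^{\triangledown}\le c\}$ belongs to $\mathcal{T}$, so $P(\xi_{\infty}^{\triangledown}\le c)\in\{0,1\}$. Setting $c_0=\inf\{c\in\mathbb{R}\,:\,P(\xi_{\infty}^{\triangledown}\le c)=1\}$, the monotonicity and right-continuity of the distribution function force $P(\xi_{\infty}^{\triangledown}=c_0)=1$, i.e. $\xi_{\infty}^{\triangledown}=c_0$ $P$-a.s. The same reasoning gives a constant value for $\xi_{\infty}^{\vartriangle}$ under $P$. (Finiteness of $c_0$ is not an issue: by Theorem \ref{LLN-bound} one has $\underline{\mu}\le\xi_{\infty}^{\vartriangle}\le\xi_{\infty}^{\triangledown}\le\overline{\mu}$ $P$-a.s., so $c_0\in[\underline{\mu},\overline{\mu}]$.)

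There is no genuine obstacle in this argument; the only point demanding a line of care is the tail-measurability step, where one must justify that discarding any finite initial block of the sequence leaves the Cesàro limsup and liminf unchanged. Everything else is a direct application of Theorem \ref{Triviality-left} together with the elementary passage from a $P$-trivial $\sigma$-algebra to $P$-a.s. constancy.
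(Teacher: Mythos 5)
Your proof is correct and is exactly the argument the paper intends: the corollary is stated immediately after Theorem \ref{Triviality-left} without proof, and your two steps---tail-measurability of $\xi_{\infty}^{\triangledown}$ and $\xi_{\infty}^{\vartriangle}$ (via discarding finite initial blocks) followed by the $0$--$1$ law and the passage from a trivial $\sigma$-algebra to a.s.\ constancy---are precisely that intended deduction. The only point worth a footnote is your appeal to Theorem \ref{LLN-bound} for finiteness: that theorem is stated for i.i.d.\ (forward-independent) sequences while this section concerns $<$i.i.d.\ sequences, but since its proof rests only on the weak-LLN error estimates, for which the paper notes the two notions of independence are interchangeable, your remark stands.
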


\subsection {Tail $\sigma$-algebra of i.i.d random variables} 

In this subsection, we investigate the triviality of the tail $\sigma$-algebra of i.i.d random variables under a regular sublinear expectation $\mathbb{E}$ defined on $\Omega=\mathbb{R}^{\mathbb{N}}$.

Although it does not hold under $P\in\Theta^e$ generally, we shall show that the tail $\sigma$-algebra of i.i.d random variables is trivial under 
each ``stationary" probability $P\in \Theta^e$, denoted by $\Theta^s$, and that
 \[\sup_{P\in \Theta^s}E_P[X]=\sup_{P\in \Theta}E_P[X]=\mathbb{E}[X], \ \textmd{for} \ X\in C_b(\Omega).\]
 
For $n\in\mathbb{N}$, let $\Theta_n$ be the subset of $\Theta$ such that for $P\in\Theta_n$ one has $P\circ\theta_n^{-1}=P$. 
Set $\Theta_{\infty}=\cup_n\Theta_n$.  
\begin {theorem}\label {Triviality-right} Let $\Theta^s=\Theta^e\cap\Theta_{\infty}$. 
\begin{itemize}
  \item[1)] For any $P\in\Theta^s$ and $A\in\mathcal{T}$, we have \[\textmd{either} \ P(A)=0, \ \textmd{or} \ P(A)=1;\]
  \item[2)] For any $P\in\Theta^s$,  we have $\xi_{\infty}^{\triangledown}=\xi_{\infty}^{\vartriangle}=m_P$, $P$-a.s., for some constant $m_P$; Moreover, $\{m_P\}_{P\in\Theta^s}$ is dense in $\big[\underline{\mu}, \ \overline{\mu}\big]$.
  \item[3)] $\Theta^s$ is a set that represents $\mathbb{E}$: \[\sup_{P\in \Theta^s}E_P[X]=\mathbb{E}[X], \ \textit{for} \ X\in C_b(\Omega).\]
  \end{itemize}
\end {theorem}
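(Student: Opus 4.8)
The plan is to treat the three assertions in the order 1) $\to$ 2) $\to$ 3), with the triviality statement 1) carrying the analytic weight and 2), 3) then following from ergodic theory together with a Krein--Milman selection. For 1) I would mirror the regular-conditional-probability scheme already used in Theorem \ref{Triviality-left}: the whole argument rests on the single inequality $E_P[X\,|\,\mathcal{T}]\le \mathbb{E}[X]$, $P$-a.s., for $X\in C_b(\Omega)$ and $P\in\Theta_{\infty}$. Granting it, one selects a regular conditional probability $P_\omega=P(\cdot\,|\,\mathcal{T})$ with $P_\omega\in\Theta$ for $P$-a.e. $\omega$, notes that $P_\omega(A)=1_A(\omega)$ $P$-a.s. for $A\in\mathcal{T}$, and uses extremality of $P\in\Theta^e$ (the disintegration $P=\int P_\omega\,dP$ into members of $\Theta$ must be concentrated at $P$) to get $P_\omega=P$ $P$-a.s.; hence $P(A)=1_A(\omega)$ a.s., which forces $P(A)\in\{0,1\}$.

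The key inequality is exactly where the (forward) i.i.d. structure diverges from the $<$i.i.d. one. First I would record the forward analogue of Lemma \ref{Char-P-left}: for $P\in\Theta$ one has the conditional bound $E_P[\psi(\theta_n\xi)\,|\,\mathcal{F}_n]\le \mathbb{E}[\psi(\xi_1,\ldots)]$, $P$-a.s., that is, ``the future conditioned on the past does not exceed $\mathbb{E}$'' (provable by the Hahn--Banach argument of Lemma \ref{Char-P-left}, since forward independence gives $\mathbb{E}[\varphi(\xi_1,\ldots,\xi_n)\psi(\theta_n\xi)]=\mathbb{E}[\varphi(\xi_1,\ldots,\xi_n)]\,\mathbb{E}[\psi]$ for $\varphi,\psi\ge 0$). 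The clean consequence I would isolate is that, writing $T=\theta_N$ for $P\in\Theta_N$ (enlarging $N$ to a multiple so that $n\le N$), the bounded martingale differences $M_k=\phi(\xi_{kN+1},\ldots,\xi_{kN+n})-E_P[\phi(\xi_{kN+1},\ldots,\xi_{kN+n})\,|\,\mathcal{F}_{kN}]$ satisfy $K^{-1}\sum_{k<K}M_k\to 0$ a.s., while Birkhoff's theorem gives $K^{-1}\sum_{k<K}\phi(\xi_{kN+1},\ldots,\xi_{kN+n})\to E_P[\phi(\xi_1,\ldots,\xi_n)\,|\,\mathcal{I}_N]$ on the invariant $\sigma$-algebra $\mathcal{I}_N$; since each conditional summand is $\le\mathbb{E}[\phi]$, this yields $E_P[\phi(\xi_1,\ldots,\xi_n)\,|\,\mathcal{I}_N]\le\mathbb{E}[\phi]$. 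For an invariant set $A$ this says $P(\cdot\,|\,A)\in\Theta$ (and likewise $P(\cdot\,|\,A^c)\in\Theta$), so if $P\in\Theta^e\cap\Theta_N$ no invariant $A$ with $0<P(A)<1$ can exist: $P$ is $\theta_N$-ergodic. Upgrading this invariant bound to the full tail $\mathcal{T}$ (for which $\mathcal{I}_N\subseteq\mathcal{T}$) is the delicate point, and I would carry it out along the reverse martingale $E_P[\phi(\xi_1,\ldots,\xi_n)\,|\,\mathcal{G}_m]\downarrow E_P[\phi(\xi_1,\ldots,\xi_n)\,|\,\mathcal{T}]$, relocating the finite block arbitrarily far to the right by stationarity and decoupling it from the remote coordinates via the forward conditional bound.

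Parts 2) and 3) then use what 1) produces. Once $P\in\Theta^s$ is $\theta_N$-ergodic, Birkhoff applied to $f=\frac1N(\xi_1+\cdots+\xi_N)$ under $T=\theta_N$ gives $\bar\xi_{KN}\to E_P[\xi_1]=:m_P$ $P$-a.s.; filling the gaps $KN\le n<(K+1)N$ with the finiteness $\xi^*_\infty=\sup_n|\bar\xi_n|<\infty$ from Theorem \ref{LLN-bound} yields $\xi_\infty^\triangledown=\xi_\infty^\vartriangle=m_P$. For density I would take, for $\mu\in[\underline\mu,\overline\mu]$, a marginal $\nu\in\Xi_1$ with $\int x\,\nu(dx)=\mu$ (possible because $\underline\mu,\overline\mu$ are the extreme means) and the classical product $\nu^{\otimes\mathbb{N}}$, which lies in $\Theta$ (iterate forward independence and monotonicity exactly as in checking $E_{\nu^{\otimes m}}[\phi]\le\mathbb{E}[\phi]$), is $\theta_1$-invariant and ergodic, and has mean $\mu$. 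For the representation I would fix $\phi\in C_{b,Lip}(\mathbb{R}^m)$ and $\varepsilon>0$, pick $P^*\in\Theta$ nearly optimal, and form the block-i.i.d. law $\tilde P$ whose blocks $(\xi_{kN+1},\ldots,\xi_{(k+1)N})$, $N\ge m$, are independent copies of $\mathrm{law}_{P^*}(\xi_1,\ldots,\xi_N)$; then $\tilde P\in\Theta_N$ (the block version of the $\nu^{\otimes\mathbb{N}}$ check) and $E_{\tilde P}[\phi(\xi_1,\ldots,\xi_m)]=E_{P^*}[\phi]>\mathbb{E}[\phi]-\varepsilon$. Since $\Theta_N$ is convex and weakly compact, Bauer's maximum principle moves the supremum to an extreme point of $\Theta_N$; identifying these with $\Theta^e\cap\Theta_N\subseteq\Theta^s$ (the ergodicity argument above shows each extreme point of $\Theta_N$ is ergodic, while an invariant point of $\Theta^e$ is automatically extreme in $\Theta_N$) gives $\sup_{P\in\Theta^s}E_P[\phi]=\mathbb{E}[\phi]$, and density of $\{m_P\}$ follows from the product-measure construction.

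The main obstacle is the inequality $E_P[X\,|\,\mathcal{T}]\le\mathbb{E}[X]$. In the $<$i.i.d. setting of Theorem \ref{Triviality-left} it came for free because that form of independence directly bounds ``early given late''; here forward independence only controls ``late given past'', so the tail --- the remote future --- is precisely the wrong side, and the estimate must be routed through shift-invariance to push the finite block into the deep interior before decoupling. The invariant-$\sigma$-algebra version is clean (Birkhoff together with the bounded-martingale law of large numbers), but promoting it to the full tail, and the accompanying bookkeeping that reconciles $\Theta^e\cap\Theta_\infty$ with the extreme points of the sets $\Theta_N$, is where the real care is required.
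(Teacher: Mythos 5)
Your proposal contains genuine gaps, and they sit exactly at the two places where the paper's proof does its real work. The most serious one is in part 1). Your ergodicity argument is sound: combining Lemma \ref{Char-P-right} with Birkhoff and the martingale LLN does give $E_P[\phi(\xi_1,\ldots,\xi_n)\,|\,\mathcal{I}_N]\le\mathbb{E}[\phi]$ for the $\theta_N$-invariant $\sigma$-algebra $\mathcal{I}_N$, hence $P(\cdot|A)\in\Theta$ for invariant $A$, hence every $P\in\Theta^e\cap\Theta_N$ is $\theta_N$-ergodic. But ergodicity is strictly weaker than triviality of $\mathcal{T}$: the measure $\tfrac12\big(\delta_{(0,1,0,1,\ldots)}+\delta_{(1,0,1,0,\ldots)}\big)$ is shift-ergodic, yet the tail event $\{\omega:\omega(2k)=0 \text{ for all large } k\}$ has probability $\tfrac12$ under it. So the step you yourself flag as ``the delicate point'' is not a technicality but the whole theorem, and the sketched repair cannot work as stated: Lemma \ref{Char-P-right} controls the conditional law of the \emph{future given the past}, whereas the reverse martingale $E_P[\phi(\xi_1,\ldots,\xi_n)\,|\,\mathcal{G}_m]$ conditions the \emph{past on the future}, and stationarity does not convert one into the other. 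The paper's resolution is to run your own disintegration-plus-extremality scheme, but over $\mathcal{F}_n$ instead of $\mathcal{T}$: shift invariance gives $P=P\circ\theta_n^{-1}=\int P_\omega\circ\theta_n^{-1}\,dP(\omega)$ for a regular conditional probability $P_\omega$ of $P(\cdot|\mathcal{F}_n)$; Lemma \ref{Char-P-right} puts $P_\omega\circ\theta_n^{-1}\in\Theta$ a.s.; extremality then forces $P_\omega\circ\theta_n^{-1}=P$ a.s. Hence the conditional law of $\theta_n\xi$ given $\mathcal{F}_n$ is the constant $P$, so by iteration $P$ is the infinite product of copies of $P_{1,n}$, and part 1) is just Kolmogorov's 0--1 law. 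Your route never produces this product structure, which is precisely what is missing between ergodicity and tail triviality.

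Parts 2) (density) and 3) have a second, independent gap: you rest them on the identification $(\Theta_N)^e=\Theta^e\cap\Theta_N$, of which only the inclusion $\Theta^e\cap\Theta_N\subset(\Theta_N)^e$ is automatic; the converse is false. Take $\mathbb{E}=\sup_{\omega}E_{\delta_\omega}$ on $\{0,1\}^{\mathbb{N}}$ as in Example \ref{YY}, whose maximal representing set is $\Theta=\mathcal{M}_1(\Omega)$ and whose coordinates are i.i.d. under $\mathbb{E}$. The alternating measure above is extreme in $\Theta_1$ (it is shift-ergodic, and ergodic measures are extreme among invariant ones) but is a nontrivial mixture of two Dirac measures in $\Theta$, so it is not in $\Theta^e$; period-$2N$ analogues defeat every $\Theta_N$. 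Thus Bauer's principle applied to $\Theta_N$ may return a maximizer outside $\Theta^s$, and your representation argument for 3) does not close. The density argument in 2) inherits the same defect: $\nu^{\otimes\mathbb{N}}$ with $\nu\in\Xi_1$ of mean $\mu$ lies in $\Theta^e$ only when $\nu\in\Xi_1^e$ (the fair-coin product is not extreme in the example above), so your witnesses need not belong to $\Theta^s$. What repairs both is the converse half of the paper's structural claim, $\Theta^e\cap\Theta_n=\Theta^*_n$: every block product $\mu^{\otimes\mathbb{N}}$ with $\mu\in\Xi_n^e$ \emph{is} extreme in $\Theta$, proved by a block-by-block induction in which Lemma \ref{Char-P-right} and extremality of $\mu$ in $\Xi_n$ force any two components of a decomposition $P=\alpha Q+(1-\alpha)R$ to agree on every cylinder. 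With that claim, Bauer/Krein--Milman is applied in the marginal set $\Xi_N$ (not in $\Theta_N$), and the $\kappa$-sequence construction of Theorem \ref{LLN-P-kappa} produces extreme marginals whose block means are dense in $[\underline{\mu},\overline{\mu}]$, yielding 2) and 3). Your block-i.i.d. construction of $\tilde P$ is the right object; what is absent is the proof that it can be taken extreme in $\Theta$.
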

To prove the above theorem, we first give a characterization of the set $\Theta$, which is a counterpart of Lemma \ref{Char-P-left}, and the proof is similar.
\begin {lemma}\label {Char-P-right} Let $P\in\Theta$. For $n, m\in\mathbb{N}$, $\phi_n\in C_b^+(\mathbb{R}^n)$ and $\psi_m\in C_b(\mathbb{R}^m)$, we have
\begin {eqnarray}\label {Theta-right}E_P[\phi_n(\xi)\psi_m(\theta_n\xi)]\le E_P[\phi_n(\xi)]\mathbb{E}[\psi_m(\theta_n\xi)].
\end {eqnarray}
Here we write $\phi_n(\xi)$ for $\phi_n(\xi_1, \cdots, \xi_n)$.
\end {lemma}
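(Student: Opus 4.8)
The plan is to exploit the ``forward'' independence of the i.i.d. sequence together with the fact that, for a fixed $P\in\Theta$, the functional $E_P$ is a genuine \emph{linear} expectation dominated by $\mathbb{E}$. The engine is a centering trick. Fix $n,m\in\mathbb{N}$, $\phi_n\in C_b^+(\mathbb{R}^n)$ and $\psi_m\in C_b(\mathbb{R}^m)$, and set the constant $c:=\mathbb{E}[\psi_m(\theta_n\xi)]$; by identical distribution $c=\mathbb{E}[\psi_m(\xi_1,\cdots,\xi_m)]$. The whole point is to establish
\[\mathbb{E}\big[\phi_n(\xi)\,(\psi_m(\theta_n\xi)-c)\big]=0,\]
after which everything is formal: the function $\phi_n(\xi)(\psi_m(\theta_n\xi)-c)$ lies in $C_b(\Omega)$, so $E_P\le\mathbb{E}$ gives $E_P[\phi_n(\xi)(\psi_m(\theta_n\xi)-c)]\le 0$, and then the linearity of $E_P$ turns this into $E_P[\phi_n(\xi)\psi_m(\theta_n\xi)]\le c\,E_P[\phi_n(\xi)]=\mathbb{E}[\psi_m(\theta_n\xi)]\,E_P[\phi_n(\xi)]$, which is exactly (\ref{Theta-right}).

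To prove the displayed identity I would first upgrade the one-step i.i.d. hypothesis to block independence: by induction on $m$, iterating the definition of independence (each $\xi_{n+j}$ being independent of $(\xi_1,\cdots,\xi_{n+j-1})$), the block $(\xi_{n+1},\cdots,\xi_{n+m})$ is independent of $(\xi_1,\cdots,\xi_n)$. Applying this with the test function $(\mathbf{y},\mathbf{x})\mapsto \phi_n(\mathbf{y})(\psi_m(\mathbf{x})-c)$, and using $\phi_n\ge0$ to pull the nonnegative ``constant'' $\phi_n(\mathbf{y})$ through the inner sublinear expectation, yields
\[\mathbb{E}\big[\phi_n(\xi)(\psi_m(\theta_n\xi)-c)\big]=\mathbb{E}\big[\phi_n(\mathbf{y})\,\mathbb{E}[\psi_m(\theta_n\xi)-c]\big|_{\mathbf{y}=\xi}\big]=\mathbb{E}[\phi_n(\xi)\cdot 0]=0,\]
because $\mathbb{E}[\psi_m(\theta_n\xi)-c]=c-c=0$ by the choice of $c$. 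This is the counterpart of the computation in Lemma \ref{Char-P-left}, with the roles of the ``future'' and ``past'' blocks interchanged, since here independence runs forward in time and it is the later block that gets centered.

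The step that needs care --- and which I expect to be the main obstacle --- is that the definition of independence is phrased only for $\varphi\in C_{b,Lip}(\mathbb{R}^{n+m})$, whereas the lemma allows $\phi_n,\psi_m\in C_b$. I would therefore first prove the identity for $\phi_n\in C_{b,Lip}^+$ and $\psi_m\in C_{b,Lip}$ and then pass to the $C_b$ case by approximation: choose $\phi_n^k,\psi_m^k\in C_{b,Lip}$ converging to $\phi_n,\psi_m$ uniformly on compact sets with $\sup_k\|\phi_n^k\|_\infty\le\|\phi_n\|_\infty$ and $\sup_k\|\psi_m^k\|_\infty\le\|\psi_m\|_\infty$. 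Since $\Theta$ is weakly compact it is tight, so both $\mathbb{E}[\,\cdot\,]$ and the fixed $E_P[\,\cdot\,]$ are stable under such bounded, compactly-uniform approximation, and the identity persists in the limit. Alternatively, one can avoid the centering trick and mimic Lemma \ref{Char-P-left} verbatim: let $\Theta_0$ be the set of $P\in\Theta$ for which (\ref{Theta-right}) holds, check as there that $\Theta_0$ is convex and weakly compact, use the i.i.d. structure to see that $\Theta_0$ still represents $\mathbb{E}$ on $Lip(\Omega)$ (and hence on $C_b(\Omega)$ by density), and conclude $\Theta_0=\Theta$ by Hahn--Banach; here the convexity and closedness checks are routine, as the right-hand side $E_P[\phi_n(\xi)]\mathbb{E}[\psi_m(\theta_n\xi)]$ depends linearly on $P$.
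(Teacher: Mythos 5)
Your proof is correct, but your main argument takes a genuinely different route from the paper's. The paper gives no direct computation for this lemma: it states that the proof is ``similar'' to that of Lemma \ref{Char-P-left}, i.e.\ the functional-analytic route --- let $\Theta_0$ be the set of $P\in\Theta$ satisfying (\ref{Theta-right}), check that $\Theta_0$ is convex and weakly compact (both sides of (\ref{Theta-right}) are affine and weakly continuous in $P$), show that $\Theta_0$ still represents $\mathbb{E}$ (first on $Lip(\Omega)$ via the i.i.d.\ hypothesis, then on $C_b(\Omega)$ by density), and conclude $\Theta_0=\Theta$ by Hahn--Banach separation; that is exactly your fallback alternative. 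Your primary argument --- center the future block by $c=\mathbb{E}[\psi_m(\theta_n\xi)]$, use block independence and positive homogeneity (precisely where $\phi_n\ge 0$ enters) to obtain the identity $\mathbb{E}\big[\phi_n(\xi)(\psi_m(\theta_n\xi)-c)\big]=0$, then apply $E_P\le\mathbb{E}$ on $C_b(\Omega)$ and the linearity of $E_P$ --- proves the inequality directly for each fixed $P\in\Theta$, with no separation theorem and no representation step. What it buys: it is more elementary and self-contained; in particular, the one assertion the paper's route leaves essentially unproved (that $\Theta_0$ represents $\mathbb{E}$, the genuinely nontrivial step there) never arises. What it costs: you must upgrade one-step independence to block independence (the routine induction you indicate) and extend from $C_{b,Lip}$ to $C_b$ test functions; your tightness argument for the latter is legitimate, since $\mathbb{E}$ is regular and hence its maximal representing set $\Theta$ is weakly compact --- just make sure the approximants of $\phi_n$ are themselves nonnegative (inf-convolutions do this automatically), and note that the paper's own device, density of $Lip(\Omega)$ in $C_b(\Omega)$ under the norm $\mathbb{E}[|\cdot|]$, would serve equally well here.
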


An equivalent statement of (\ref{Theta-right}):  Let $P_{\omega}$ be a regular conditional probability of $P[\cdot|\mathcal{F}_n]$. Then 
\begin {eqnarray}\label {Theta-right2}
P_{\omega}\circ\theta_n^{-1}\in \Theta, \ P\textmd{-a.s.}
\end {eqnarray}

\begin {proof}[Proof to Theorem \ref {Triviality-right}] For $n\in\mathbb{N}$, set $\Xi_n=\{P_{1,n}=P\circ(\xi_1,\cdots, \xi_n)^{-1}|P\in\Theta\}$, which is convex and closed. Let $\Xi^e_n$ be the set of extreme points of $\Xi_n$, and set \[\Theta^*_n=\{\otimes_{k=1}^{\infty}\mu_k| \ \mu_k=\mu\in\Xi^e_n, \ k\in\mathbb{N}\}.\]
{\bf Claim:}  $\Theta^e\cap\Theta_{n}=\Theta^*_n$, for any $n\in\mathbb{N}$.

Clearly, the conclusions 1), 3) of the theorem follow immediately from this claim. For the conclusion 2), notice that, for $P\in\Theta^*_n$, we have
$\xi_{\infty}^{\triangledown}=\xi_{\infty}^{\vartriangle}=\frac{1}{n}E_P[\Sigma_{k=1}^n\xi_k]=:m_P$, $P$-a.s. By arguments similar to the proof to Theorem \ref {LLN-P-kappa},
we get the $\{m_P\}_{P\in\Theta^s}$ is dense in $\big[\underline{\mu}, \ \overline{\mu}\big]$.

 Let us prove the claim now.

a) $\Theta^e\cap\Theta_{n}\subset\Theta^*_n$.

We first prove that $\theta_n\xi$ is independent of $\xi_1, \cdots, \xi_n$ under $P\in\Theta^e\cap\Theta_{n}$. In fact, for any $\phi\in C_b(\mathbb{R}^m), \ m\in\mathbb{N}$, we have
\begin {eqnarray*}
E_{P}[\phi(\xi)]&=&E_{P}[\phi(\theta_{n}\xi)]\\
&=&E_P[E_{P}[\phi(\theta_{n}\xi)|\mathcal{F}_n]]\\
&=&E_P[E_{P_{\omega}\circ\theta_n^{-1}}[\phi(\xi)]],
\end {eqnarray*} where $P_{\omega}$ is a regular conditional probability of $P(\cdot|\mathcal{F}_n)$.
Since $P_{\omega}\circ\theta_n^{-1}\in\Theta$ by Lemma \ref {Char-P-right} and $P\in\Theta^e$, we conclude that $E_{P}[\phi(\theta_{n}\xi)|\mathcal{F}_n]$ is a constant.

Now we show that $P_{1,n}\in\Xi_n^e$. If $P_{1,n}\notin\Xi_n^e$, there exist $\alpha\in(0,1)$, $\mu,\nu\in\Xi_n$, $\mu\neq\nu$ such that $\alpha\mu+(1-\alpha)\nu=P_{1,n}$. Set $Q=\mu\otimes P$ and $R=\nu\otimes P$. Then $Q,R\in\Theta$, $Q\ne R$ and $P=\alpha Q+(1-\alpha)R$. This is a contradiction.

b) $\Theta^e\cap\Theta_{n}\supset\Theta^*_n$.

Let $P\in\Theta^*_n$. We only need to prove that $P\in\Theta^e$. If there exit $\alpha\in(0,1)$, $Q, R\in\Theta$ such that $P=\alpha Q+(1-\alpha)R$, then $P_{i,j}=\alpha Q_{i,j}+(1-\alpha)R_{i,j}$ for any $i,j\in\mathbb{N}$, $i\le j$. Since $P_{in+1,(i+1)n}\in\Xi^e_n$ and $Q_{in+1,(i+1)n}, R_{in+1,(i+1)n}\in\Xi_n$, we have 
\[P_{in+1,(i+1)n}=Q_{in+1,(i+1)n}=R_{in+1,(i+1)n}, \  for \  i\ge0.\] 
Assuming that $P_{1, kn}=Q_{1,kn}=R_{1,kn}$ for some $k\in\mathbb{N}$, we shall prove that 
\[P_{1, (k+1)n}=Q_{1,(k+1)n}=R_{1,(k+1)n}.\] 
Let $Q_{\omega}$ be a regular conditional probability of $Q[\cdot|\mathcal {F}_{kn}]$. By Lemma \ref {Char-P-right}, we have $Q_{\omega}\circ\theta_{kn}^{-1}\in\Theta$, $Q$-a.s., and consequently, $Q_{\omega}\circ\theta_{kn}^{-1}[(\xi_1,\cdots, \xi_n)\in\cdot]\in\Xi_n$, $Q$-a.s. Note that, for $\phi\in C_b(\mathbb{R}^n)$,
\[E_{Q_{kn+1,(k+1)n}}[\phi]=E_Q[\phi(\theta_{kn}\xi)]=E_Q[E_Q[\phi(\theta_{kn}\xi)|\mathcal{F}_{kn}]]=E_Q[E_{Q_{\omega}\circ\theta_{kn}^{-1}}[\phi(\xi)]].\]
Since  $Q_{kn+1,(k+1)n}=P_{kn+1,(k+1)n}\in\Xi^e_n$, we have 
\[E_{Q}[\phi(\theta_{kn}\xi)|\mathcal {F}_{kn}]=E_{Q_{\omega}\circ\theta_{kn}^{-1}}[\phi(\xi)]=E_{P_{kn+1,(k+1)n}}[\phi], \ P_{1,kn}\textmd{-a.s.}\] 
So we have $P_{1, (k+1)n}=Q_{1,(k+1)n}$ and, similarily, $P_{1, (k+1)n}=R_{1,(k+1)n}$. \end {proof}

\section {Some examples}
A typical sublinear expectation that satisfies the assumptions imposed in this paper is $G$-expectation space introduced by Peng S.  In this section, we present some other examples.
\begin {example}\label {YY}
Let $\Omega=\{0,1\}^{\mathbb{N}}$. For $\omega, \bar{\omega}\in\Omega$, define $d(\omega, \bar{\omega})=\Sigma_{k=1}^{\infty}\frac{1}{2^k}|\omega(k)-\bar{\omega}(k)|$. For $\omega\in\Omega$, let $\delta_{\omega}$ be the Dirac measure on $\Omega$. Set $\Theta=\{\delta_{\omega}| \ \omega\in\Omega\}$ and $\xi_n(\omega)=\omega(n)$.
It is easy to check that $\Omega$ is compact with respect to the metric $d$, and that $\xi_n$ is continuous on $(\Omega, d)$ for any $n\in\mathbb{N}$. We claim that
 $\{\xi_n\}$ is $i.i.d.$  under the sublinear expectation $\mathbb{E}=\sup_{\omega\in\Omega}E_{\delta_\omega}$. 
 
 It suffices to prove that $\xi_m$ is independent of $\xi_1, \cdots, \xi_{m-1}$,  for any $m\in\mathbb{R}$. In fact,  for any $\phi\in C_{b, Lip}(\mathbb{R}^m)$, 
 and $(x_1, \cdots, x_{m-1})\in\{0,1\}^{m-1}$, we have 
 \[\mathbb{E}[\phi(x_1, \cdots,x_{m-1}, \xi_m)]=\max\limits_{y\in\{0,1\}}\phi(x_1, \cdots, x_{m-1},  y)\le\max\limits_{\vec{y}\in\{0,1\}^m}\phi(y_1, \cdots, y_{m})=\mathbb{E}[\phi(\xi_1, \cdots, \xi_m)].\] Therefore,
  \[\mathbb{E}[\mathbb{E}[\phi(x_1, \cdots,x_{m-1}, \xi_m)]|_{x_i=\xi_i}]\le\mathbb{E}[\phi(\xi_1, \cdots, \xi_m)].\]
 On the other hand, assume $\mathbb{E}[\phi(\xi_1, \cdots, \xi_m)]=\phi(x^0_1, \cdots, x^0_m)$ for some $(x^0_1, \cdots, x^0_m)\in\{0,1\}^m$. Then, we have
 $\mathbb{E}[\phi(x^0_1, \cdots, x^0_{m-1}, \xi_m)]=\phi(x^0_1, \cdots, x^0_m),$ and, consequently,
  \[\mathbb{E}[\mathbb{E}[\phi(x_1, \cdots,x_{m-1}, \xi_m)]|_{x_i=\xi_i}]\ge\mathbb{E}[\phi(\xi_1, \cdots, \xi_m)].\] 
  It follows from the above arguments that $\xi_m$ is independent of $\xi_1, \cdots, \xi_{m-1}$.
   
  For this example, it is easily seen that the conclusions in Theorem \ref {LLN-bound}, \ref {LLN-P-kappa} hold.
  
\end {example}

\begin {example}  Let $\Omega$ be the same space as is defined in Example \ref {YY}, and let $\Omega_0$ be the subset of $\Omega$ with $\xi_i=0$ for all except a finite number $i\in\mathbb{N}$.

i) (\cite {Teran18}) Let $\Theta_0=\{\delta_{\omega}| \omega\in\Omega_0\}$ and  $\mathbb{E}^0=\sup_{\omega\in\Omega_0}E_{\delta_\omega}$. 

ii) Set $\hat{\xi}_n=\xi_n1_{\Omega_0}$ and  $\mathbb{E}=\sup_{\omega\in\Omega}E_{\delta_\omega}$.

Similar to Example \ref {YY}, it can be shown that $\{\xi_n\}$ (resp. $\{\hat{\xi}_n\}$) is $i.i.d.$  under the sublinear expectation $\mathbb{E}^0$ (resp. $\mathbb{E}$). 

For these two examples, it is easily seen that the conclusion in Theorem \ref {LLN-bound} still holds, but Theorem \ref {LLN-P-kappa}  fails since the assumptions there are violated. More precisely, $\Theta_0$ in i) is not weakly compact, and $\hat{\xi}_n$ in ii) is not (quasi-)continuous.

\end {example}

\section*{Acknowledgements}
The author would like to thank Professor Shige Peng for very fruitful discussions. The author is financially supported by National Key R\&D Program of China (No. 2020YFA0712700 \& No. 2018YFA0703901), NSFCs (No. 11871458 \& No. 11688101); and
Key Research Program of Frontier Sciences, CAS (No. QYZDB-SSW-SYS017).


\renewcommand{\refname}{\large References}{\normalsize \ }

\end{document}